\documentclass [12pt,oneside,a4paper,mathscr,reqno]{amsart}

\usepackage{amscd,amsmath,amssymb,euscript}

\usepackage{geometry} 
\usepackage{amsfonts}
\usepackage{amsmath,amssymb,amsthm,amsxtra}
\usepackage{float}
\usepackage[colorlinks,
  linkcolor=RoyalBlue, anchorcolor=Periwinkle,
  citecolor=Orange, urlcolor=Emerald]{hyperref}
\usepackage[usenames,dvipsnames]{xcolor}
\usepackage{enumitem}
\setlength{\unitlength}{2.7pt}
\usepackage{graphicx}
\usepackage{subfigure}
\usepackage{bookmark}
\usepackage{tikz}\usetikzlibrary{matrix}
\usepackage{url}

\usepackage[all]{xy}

\usetikzlibrary{decorations.pathreplacing}
\usetikzlibrary{matrix,arrows}

\def\graphc{Emerald}
\def\vertexc{blue}
\def\halfc{blue}
\def\thirdc{blue}


%
%


\date{}

\openup1.6\jot
\setlength{\topmargin}{0.1\topmargin}
\setlength{\oddsidemargin}{0.5\oddsidemargin}
\setlength{\evensidemargin}{0.5\oddsidemargin}
\setlength{\textheight}{1.02\textheight}
\setlength{\textwidth}{1.1\textwidth}

\title{Stability conditions and the $A_2$ quiver}
\author{Tom Bridgeland, Yu Qiu and Tom Sutherland}

\newtheorem{thm}{Theorem}[section]

\newtheorem{prop}[thm]{Proposition}
\newtheorem{lemma}[thm]{Lemma}

\theoremstyle{definition}
\newtheorem{defn}[thm]{Definition}

\newtheorem{thm*}[thm]{Theorem$^*$}
\newtheorem{remark}[thm]{Remark}

\newtheorem*{example*}{Example}

\renewcommand{\leq}{\leqslant}
\renewcommand{\geq}{\geqslant}

\newcommand{\mat}[4]{\begin{pmatrix}#1&#2\\#3&#4\end{pmatrix}}

\newcommand {\A}{\mathcal A}

\newcommand{\D}{\operatorname{D}}
\newcommand{\T}{\mathcal{T}}
\newcommand{\h}{\mathfrak{h}}
\newcommand{\Br}{\operatorname{Br}}
\newcommand {\Hom}{\operatorname{Hom}}
\newcommand {\Aut}{\operatorname{Aut}}
\newcommand {\Auts}{\operatorname{Aut}_*}
\newcommand{\Sph}{\operatorname{Sph}_*}
\newcommand {\Ext}{\operatorname{Ext}}
\newcommand {\Rep}{\operatorname{Rep}}
\renewcommand {\H}{\operatorname{\mathcal H}}
\newcommand{\Tw}{\operatorname{Tw}}

\newcommand{\Stab}{\operatorname{Stab}}
\renewcommand{\Im}{\operatorname{Im}}
\renewcommand{\Re}{\operatorname{Re}}
\renewcommand{\P}{\mathbb{P}}
\newcommand{\reg}{\operatorname{reg}}
\newcommand{\PSL}{\operatorname{PSL}}
\newcommand{\cS}{\mathcal{S}}
\newcommand{\R}{\mathbb R}
\newcommand{\FF}{\mathcal{F}}

\newcommand{\lra}{\longrightarrow}

\newcommand {\<}{\langle}
\renewcommand {\>}{\rangle}
\newcommand {\Omit}[1]{}
\newcommand{\isom}{\cong}
\newcommand{\tensor}{\otimes}

\newcommand{\C}{\mathbb{C}}
\newcommand{\Z}{\mathbb{Z}}

\newcommand{\EG}{\operatorname{EG}}
\newcommand{\Ai}{\operatorname{Ai}}
\newcommand{\bC}{\C}

\newcommand{\blob}{{\scriptscriptstyle\bullet}}

\newcommand{\todo}[1]{}
\newcommand{\cU}{\mathcal{U}}

\begin{document}
\begin{abstract}{For each integer $n\geq 2$ we describe the space of stability conditions on the derived category of the $n$-dimensional Ginzburg algebra associated to the $A_2$ quiver. The form of our results points to a close relationship between these spaces and  the Frobenius-Saito structure on the unfolding space of the $A_2$ singularity.}\end{abstract}
\maketitle

\section{Introduction}\label{sec:intro}

In this paper we study spaces of stability conditions \cite{B} on the sequence of CY$_n$ triangulated categories $\D_n$ associated to the $A_2$ quiver.
Our main result is Theorem \ref{first} below. There are several striking  features. Firstly, we obtain uniform results for all  $n\geq 2$: the space of stability conditions quotiented by the action of the spherical twists is independent of $n$, although the identification maps are highly non-trivial.  Secondly, there is a close link between our spaces of stability conditions and the Frobenius-Saito structure on  the unfolding space of the  $A_2$ singularity: in fact this structure is precisely what encodes the  identifications between our stability spaces for various  $n$. A third interesting feature is that the space of stability conditions on the derived category of the path algebra of the $A_2$ quiver arises as a kind of limit of the spaces for the categories $\D_n$ as $n\to \infty$.

\subsection{Statement of results}
For each integer $n\geq 2$ we let $\D_n=\D_{\operatorname{CY}_n}(A_2)$ denote the bounded derived category of the CY$_n$ complex Ginzburg algebra associated to the $A_2$ quiver. It is a triangulated category of finite type over $\C$, and is characterised by the following two properties:
\begin{itemize}
\item[(a)] It is CY$_n$, i.e. for any pair of objects $A,B\in \D_n$ there are natural isomorphisms
\begin{equation}
\label{cyn}\Hom^\blob_{\D_n}(A,B)\isom \Hom^\blob_{\D_n}(B,A[n])^\vee.\end{equation}
\item[(b)] It is generated by two spherical objects $S_1,S_2$ satisfying
\begin{equation}
\label{ext1}\Hom_{\D_n}^*(S_1,S_2)=\C[-1].\end{equation}
\end{itemize}
We denote by $\D_\infty$ the bounded derived category of the complex path algebra of the $A_2$ quiver. It is again a $\C$-linear triangulated category, and is characterised by the property that it is  generated by two exceptional objects $S_1,S_2$, which satisfy \eqref{ext1} and
\[\Hom_{\D_\infty}^*(S_2,S_1)=0.\]
The notation $\D_\infty$  is convenient: the  point being  that as $n$ increases, the Serre dual to the extension $S_1\to S_2[1]$ occurs in higher and higher degrees, until when $n=\infty$ it doesn't occur at all.

For each  $2\leq  n\leq \infty$ we denote by $\Stab(\D_n)$  the space of stability conditions on the category $\D_n$. We define $\Stab_*(\D_n)\subset \Stab(\D_n)$ to be the connected component containing stability conditions in which the objects $S_1$ and $S_2$ are stable of equal phase. Let $\Aut(\D_n)$ denote the group of exact $\C$-linear autoequivalences of the category $\D_n$, considered up to isomorphism of functors. We define $\Auts(\D_n)$ to be the subquotient consisting of autoequivalences which preserve the connected component $\Stab_*(\D_n)$, modulo those which act trivially on it. When $n<\infty$ the objects $S_i$ are spherical and hence define Seidel-Thomas twist functors $\Tw_{S_i}\in \Aut(\D_n)$. These autoequivalences   preserve the connected component $\Stab_*(\D_n)$, and we denote by $\Sph(\D_n)\subset \Auts(\D_n)$  the subgroup they generate.

The simple complex Lie algebra associated to the $A_2$ quiver is $\mathfrak{g}=\mathfrak{sl}_3(\C)$. Its Cartan subalgebra  can be described explicitly as
\[\h=\{(u_1,u_2,u_3)\in \C^3: \sum_i u_i=0\}.\]
The complement of the root hyperplanes is
\[\h^{\reg}=\{(u_1,u_2,u_3)\in \h: i\neq j \implies u_i\neq u_j\}.\]
There is an obvious action of the Weyl group $W=S_3$ permuting the $u_i$ which is free on $\h^{\reg}$.
The quotient $\h/W$ is isomorphic to $\C^2$, and has natural co-ordinates $(a,b)$ obtained by writing
\[p(x)=(x-u_1)(x-u_2)(x-u_3)=x^3+ax+b.\]
The image of the root hyperplanes $u_i=u_j$ is the discriminant
\[\Delta=\{(a,b)\in \C^2: 4a^3+27b^2=0\}.\]

We can now state the main result of this paper.

\begin{thm}
\label{first}
\begin{itemize}
\item[(a)] For $2\leq n<\infty$ there is an isomorphism of complex manifolds \[\Stab_*(\D_n)/\Sph(\D_n) \isom \h^{\reg}/W.\]
Under this isomorphism the central charge map $\Stab_*(\D_n)\to \C^2$ induces the multi-valued map $\h^{\reg}/W\to \C^2$ given by
\[\int_{\gamma_i} p(x)^{(n-2)/2} \, dx\]
for an appropriate basis of paths $\gamma_i$ connecting the zeroes of the polynomial $p(x)$.
\smallskip

\item[(b)] For $n=\infty$ there is an isomorphism of complex manifolds
\[\Stab(\D_\infty)\isom \h/W.\]
Under this isomorphism the central charge map $\Stab(\D_\infty)\to \C^2$ corresponds to the map $\h/W\to \C^2$ given by
\[\int_{\delta_i} e^{ p(x)}\, dx\]
for an appropriate basis of paths $\delta_i$ which approach $\infty$ in both directions along rays for which $\Re(x^3)\to -\infty$.
\end{itemize}
\end{thm}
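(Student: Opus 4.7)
The plan is to cover $\Stab_*(\D_n)$ by cells indexed by the tilts of the canonical heart, identify the quotient by $\Sph(\D_n)$ with $\h^{\reg}/W$, and match the central charges with the period integrals via the Gauss-Manin connection. For part (a), since $\D_n$ is generated by two spherical objects, every heart $\A$ contributing to $\Stab_*(\D_n)$ should be finite-length with two simples; for the $A_2$ configuration it contains exactly three indecomposables --- the simples $T_1, T_2$ and the unique non-split extension $T_{12}$ with $[T_{12}] = [T_1]+[T_2]$ in $K_0$. By Bridgeland's deformation theorem each such $\A$ gives an open cell $U(\A) \subset \Stab_*(\D_n)$ isomorphic to a product of two copies of the (semi-closed) upper half plane, parametrised by $Z(T_1), Z(T_2)$, and the cells glue along walls produced by simple tilting. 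I would show these cells cover $\Stab_*(\D_n)$ by tracking how the heart changes across any wall encountered along a path.

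To identify the quotient, I would send $\sigma$ to the triangle in $\C$ having sides $Z(T_1), Z(T_2), -Z(T_{12})$; after centering, its unordered vertex set $\{u_1,u_2,u_3\}$ with $\sum u_i = 0$ and distinct $u_i$ (by nondegeneracy of $\sigma$) gives a canonical point of $\h^{\reg}/W$. The map factors through the quotient by $\Sph(\D_n)$ because spherical twists permute hearts and relabel simples while preserving the triangle. To show the induced map is an isomorphism, I would identify $\Sph(\D_n) \cong B_3 = \pi_1(\h^{\reg}/W)$ with the deck group of the universal cover, so that $\Stab_*(\D_n) \to \h^{\reg}/W$ is itself a covering map descending to the claimed isomorphism.

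For the period identification, the descended central charge map is multi-valued on $\h^{\reg}/W$ and generates a rank-two local system. The integrals $\int_{\gamma_i} p(x)^{(n-2)/2}\,dx$ generate the $A_2$ Gauss-Manin local system underlying the Frobenius-Saito structure on $\h/W$. I would match the two by computing their monodromy (both factor through $B_3 \to \mathrm{GL}_2(\Z)$) and their singular behaviour as $u_i \to u_j$: there one central charge must tend to zero because a simple becomes non-stable, and the corresponding period degenerates the same way. For $n = 2$ the integrand is $dx$ and the periods reduce to $u_j - u_i$, pinning down the global normalisation that propagates to all $n$ by flatness.

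For part (b), the hereditary category $\D_\infty$ has a finite exchange graph (the five clusters of the $A_2$ cluster algebra) and no non-trivial braid symmetry, so gluing the cells directly yields $\Stab(\D_\infty) \cong \C^2 \cong \h/W$, with the image of $\Delta$ being the degeneration locus of hearts. The periods $\int_{\delta_i} e^{p(x)}\,dx$ generate a rank-two local system on $\h/W$ (trivial since the base is simply connected), and identification with the central charges follows by matching boundary behaviour at $\Delta$. The main obstacle throughout is this last transcendental identification: while the abstract isomorphism of complex manifolds follows from tilting theory and Bridgeland's deformation theorem fairly directly, pinning down the specific period realisation requires a careful analysis of monodromy and asymptotics at the discriminant rather than any direct calculation.
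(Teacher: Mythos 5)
Your first half --- cells indexed by reachable hearts, a local biholomorphism to $\h^{\reg}/W$ upgraded to a covering map, deck group $\Br_3\isom\Sph(\D_n)$ --- is a workable route to the bare isomorphism $\Stab_*(\D_n)/\Sph(\D_n)\isom \h^{\reg}/W$; it is essentially the route of \cite{Br,Qiu}, whereas this paper instead exhibits an explicit fundamental domain $\F$ for $\P\Auts(\D_n)$ and glues translates of it. But two of your structural claims are false as stated. For $n>3$ the intermediate reachable hearts $\langle S_1[k],S_2\rangle$ with $0<k<n-2$ have only \emph{two} indecomposables and no extension between their simples (see the Remarks after Prop.~\ref{tilts}), so the ``unique non-split extension $T_{12}$'' does not exist in every heart and your triangle is degenerate into a segment on those cells; and for $n=\infty$ the heart exchange graph is infinite (the pentagon is the \emph{cluster} exchange graph), which is exactly why infinitely many cells must be glued to produce $\Stab(\D_\infty)\isom\C^2$. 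Both points are repairable but change the wall-crossing combinatorics you would need to track.

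The genuine gap is the period identification, which you flag as ``the main obstacle'' and then dispose of by ``matching monodromy and asymptotics''. Your triangle map, by construction, identifies the central charge with the \emph{untwisted} differences $u_i-u_j=\int_{\gamma}dx$; the monodromy of that multivalued map factors through $W=S_3$, while the twisted periods for $\nu=(n-2)/2$ have monodromy the $K_0$-action of $\Br_3$ (a reflection for $n=2$, a transvection of infinite order for $n=3$, etc.), so the two local systems are genuinely non-isomorphic for $n>2$ and no ``identification'' of them exists --- what you must actually prove is that the central charge, transported to $\h^{\reg}/W$ by your isomorphism, equals the twisted period map after correcting by an automorphism of the base. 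Equality of monodromy representations plus leading asymptotics at the discriminant does not force two equivariant local biholomorphisms to $\C^2$ to agree up to such an automorphism; that rigidity is the whole content of the analytic step, and your normalisation argument (``propagates from $n=2$ by flatness'') is not meaningful, since the twisted Gauss--Manin connections for different $\nu$ are different connections with unrelated flat sections. The paper supplies exactly this missing rigidity: Prop.~\ref{rain} shows the projectivised central charge maps $\F$ biholomorphically onto the explicit region $R_n$; the Riemann map of $R_n^+$ is pinned down by the Schwarz triangle theorem through its three angles $(\tfrac12,\tfrac{n-1}2,\tfrac13)$; and the Appendix shows the twisted periods satisfy a hypergeometric equation with precisely those exponent differences, so their ratio \emph{is} that Riemann map (Prop.~\ref{endy}). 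Nothing in your proposal substitutes for this. For part (b) the problem is compounded: the integrals $\int_{\delta_i}e^{p(x)}dx$ are entire in $(a,b)$, so there is no singular behaviour at $\Delta$ to match; the identification there is governed by the behaviour at infinity (the Schwarzian computation reducing to the Airy equation), not by the discriminant.
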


Theorem \ref{first} gives a precise link with the Frobenius-Saito structure on the unfolding space of the $A_2$ singularity $x^3=0$. The corresponding Frobenius manifold is  precisely $M=\h/W$. The maps appearing in part (a) of our result are then the \emph{twisted period maps} of $M$ with parameter $\nu=(n-2)/2$ (see  Equation (5.11) of \cite{Du2}). The map in part (b) is given by the \emph{deformed flat co-ordinates} of $M$ with parameter $\hbar=1$ (see \cite[Theorem 2.3]{Du1}).

\subsection{Related work}
Just as we were  finishing this paper, A. Ikeda posted  the paper \cite{Ikeda}  which also proves Theorem \ref{first} (a), and indeed generalizes it to the case of the $A_k$ quiver for all $k\geq 1$. The methods we use here  are quite different however, and also yield (b),  so we feel that this paper is  also worth publishing.

As explained above, two of the most interesting features of Theorem \ref{first} are the fact that the space $\Stab_*(\D_n)/\Sph(\D_n)$ is independent of $n<\infty$, and that this space embeds in $\Stab(\D_\infty)$. At the  level of  exchange graphs such results were observed for arbitrary acyclic quivers by  one of us  with A.D. King \cite{KY}.

The $n<\infty$ case of Theorem \ref{first} was first considered by R.P. Thomas in \cite{Th}: he obtained the $n=2$ case and discussed the relationship with Fukaya categories and homological mirror symmetry.  The  $n=2$ case was also proved in \cite{Br} and generalised to arbitrary ADE Dynkin diagrams.
The $n=3$ case of Theorem \ref{first} was proved in \cite{Su}, and was extended to all Dynkin quivers of $A$ and $D$ type in \cite{BrSm}. The first statement of part (a),  that $\Stab_*(\D)\isom \h^{\reg}/W$,   was proved for all $n<\infty$ in \cite{Qiu}.

The case $n=\infty$ of Theorem \ref{first} was first considered by King \cite{King} who proved that  $\Stab(\D_\infty)\isom \C^2$. This result was obtained by several other researchers since then, and a proof was written down in \cite{Qiu}. The more precise statement of Theorem \ref{first} (b) was conjectured by A. Takahashi \cite{Ta}.

Since the first version of this paper was posted, several  generalizations and extensions of Theorem \ref{first} have appeared. The following seem particularly noteworthy. In \cite{HKK} a  general result  relating stability conditons on Fukaya categories of surfaces to spaces of  quadratic differentials with exponential singularities is proved. This includes Theorem \ref{first} (b) as a very special case.

In \cite{IY} a new notion of q-stability conditions is introduced, in terms of which one can make sense of the statement of Theorem \ref{first} when the Calabi-Yau dimension $n\geq 2$ is replaced with an arbitrary $s\in \bC$ with $\Re(s)\geq 2$. This allows one to see all twisted periods of the Frobenius manifold via central charges.

In \cite{Mc}, an analogue of Theorem \ref{first} for the Kronecker quiver is proved. In this case the relevant Frobenius manifold is the quantum cohomology of $\P^1$. The case of quivers of type affine $A_n$ was considered in \cite{W}. A general framework for such results, involving Fukaya categories of surfaces, quadratic differentials and Hurwitz spaces is explained in \cite{IY2}.

\subsection*{Acknowledgements.} We thank Alastair King and Caitlin McAuley for many useful conversations on the topic of this paper.
Qiu is is supported by Beijing Natural Science Foundation (Z180003).

\section{Autoequivalences and t-structures}

In this section we describe the principal components of the exchange graphs of the categories $\D_n=\D_{\operatorname{CY}_n}(A_2)$ and study the action of the group of reachable autoequivalences. We start by recalling some general definitions concerning tilting (see  \cite[Section 7]{BrSm} for more details).

\subsection{}
Let $\D$ be a triangulated category. We shall be concerned with bounded t-structures on $\D$.
Any such t-structure  is determined by its heart $\A\subset \D$, which is a full abelian subcategory. We  use the term  \emph{heart} to mean the heart of a bounded t-structure.
A heart  will be called \emph{finite-length} if it is   artinian and noetherian as an abelian category.

We say that a pair of  hearts $(\A_1,\A_2)$ in $\D$ is a \emph{tilting pair}  if the equivalent conditions
\[\A_2\subset \langle \A_1,\A_1[-1] \rangle, \quad \A_1\subset \langle\A_2[1],\A_2\rangle\]
are satisfied. Here the angular brackets  signify the extension-closure operation
We also say that $\A_1$ is a \emph{left tilt} of $\A_2$, and that $\A_2$ is a \emph{right tilt} of $\A_1$. Note that $(\A_1,\A_2)$ is  a tilting pair precisely if  so is $(\A_2[1],\A_1)$.

If $(\A_1,\A_2)$ is a tilting pair in $\D$, then  the subcategories \[\T=\A_1\cap \A_2[1], \quad  \FF=\A_1\cap \A_2\] form a torsion pair $(\T,\FF)\subset \A_1$. Conversely, if $(\T,\FF)\subset \A_1$ is a torsion pair, then the subcategory $\A_2=\langle \FF, \T[-1]\rangle$ is a heart, and the pair $(\A_1,\A_2)$ is a tilting pair.

%

A special case of the tilting construction will be particularly important.
Suppose that $\A$ is a finite-length heart and $S\in\A$ is a simple object.
 Let
$\langle S \rangle\subset \A$ be the full subcategory consisting of objects $E\in\A$ all
of whose simple factors are isomorphic to $S$. Define full subcategories
\[ \smash{^\perp}{S}=\{E\in\A:\Hom_{\A}(E,S)=0\}, \qquad S^\perp=\{E\in \A:\Hom_{\A}(S,E)=0\}.\]
One can either view $\langle S \rangle$ as the
torsion-free part of a torsion pair on $\A$, in which case the torsion part is $\smash{^\perp}{S}$, or as  the torsion part,  in which case the torsion-free
part is $S^\perp$.
We can then define tilted
 hearts
\[ \mu^-_S (\A)=\langle S[1],\smash{^\perp}{S}\rangle,\qquad \mu^+_S (\A) = \langle S^\perp, S[-1]\rangle,\]
which we refer to as the left and right tilts of the heart $\A$ at the simple $S$.
They fit into tilting pairs $(\mu^-_S (\A),\A)$ and $(\A,\mu^+_S (\A))$. Note the relation
\begin{equation*}
\mu^+_{S[1]}\circ \mu^-_S(\A)=\A.\end{equation*}
The \emph{exchange graph} $\EG(\D)$ is the graph with  vertices the finite-length hearts in $\D$ and edges corresponding to simple tilts. The group $\Aut(\D)$ of triangulated autoequivalences of $\D$ acts on this graph in the obvious way: an auto-equivalence $\Phi\in \Aut(\D)$ sends a finite-length heart $\A\subset \D$ to the finite-length heart $\Phi(\A)\subset \D$.

\begin{figure}
\begin{center}
\begin{tikzpicture}[scale=0.45]
\draw (0,4)--(13,4);
\draw (0,9)--(13,9);
\draw (1.5,4)--(1.5,9);
\draw (4,4)--(4,9);
\draw (9.25,4)--(9.25,9);
\draw (11.75,4)--(11.75,9);
\draw (2.75,6.5) node {$S[1]$};
\draw (6.5,6.5) node {$^\perp\!S$};
\draw (10.5,6.5) node {$S$};
\draw [decorate, decoration={brace,amplitude=5pt}] (4,9.5)--(11.75,9.5)
node [midway, above=6pt] {$\A$};
\draw [decorate, decoration={brace,amplitude=5pt}] (9.25,3.5)--(1.5, 3.5)
node [midway, below=6pt] {$\mu_S^-(\A)$};
\end{tikzpicture}
\quad\quad
\begin{tikzpicture}[scale=0.45]
\draw (0,4)--(13,4);
\draw (0,9)--(13,9);
\draw (1.5,4)--(1.5,9);
\draw (4,4)--(4,9);
\draw (9.25,4)--(9.25,9);
\draw (11.75,4)--(11.75,9);
\draw (2.75,6.5) node {$S$};
\draw (6.5,6.5) node {$S^\perp$};
\draw (10.5,6.5) node {$S[-1]$};
\draw [decorate, decoration={brace,amplitude=5pt}] (1.5,9.5)--(9.25,9.5)
node [midway, above=6pt] {$\A$};
\draw [decorate, decoration={brace,amplitude=5pt}] (11.75,3.5)--(4, 3.5)
node [midway, below=6pt] {$\mu_{S}^+(\A)$};
\end{tikzpicture}
\caption{Left and right tilts of a heart.}\label{fig:mut-hearts}
\end{center}
\end{figure}

\subsection{}   For each $2\leq n\leq \infty$ we define the triangulated category $\D_n$ as in the Introduction. It is the bounded derived category of the CY$_n$ Ginzburg  algebra associated to the A$_2$ quiver \cite{Ginz}. This category contains two distinguished objects $S_1$ and $S_2$ corresponding to the vertices of the quiver,  and a canonical   heart \[\A_n=\<S_1,S_2\>\subset \D_n,\] which is the extension-closed subcategory generated by these objects. The heart $\A_n$ has finite-length, and hence defines a point of the exchange graph $\EG(\D_n)$; we denote by $\EG^\circ(\D_n)$  the connected  component containing this point. We call  $\EG^\circ(\D_n)$ the \emph{principal component} of the exchange graph, and refer to the hearts defined by  its  vertices as \emph{reachable} hearts. We say that a heart $\A\subset \D_n$ is \emph{full} if it is equivalent to $\A_n$ as an abelian category.


\begin{remark}
When $n>2$, the canonical heart $\A_n$ is  equivalent to the category $\Rep(A_2)$ of representations of the $A_2$ quiver;
besides the simple objects $S_1$ and $S_2$, it contains one more indecomposable object which we denote by $E$; there is a short exact sequence
\begin{equation}
\label{ses}
0\lra S_2\lra E\lra S_1\lra 0.\end{equation}
When $n=2$, the canonical heart  $\A_2$ is equivalent to the category of representations of the preprojective algebra of the $A_2$ quiver; besides  $E$ there is another non-simple indecomposable object fitting into a short exact sequence
\begin{equation}
\label{sestwo}
0\lra S_1\lra F\lra S_2\lra 0.\end{equation}
For this reason, the case $n=2$ is slightly special, and since our main result Theorem \ref{first} is already known (and is considerably easier to prove) in this case \cite{Th,Br}, in what follows we shall restrict to the case $n\geq 3$.
\end{remark}

A triangulated  autoequivalence of $\D_n$  is called \emph{reachable} if its action on $\EG(\D_n)$ preserves the connected component $\EG^\circ(\D_n)$.   An autoequivalence is called \emph{negligible} if it is reachable and acts by the identity on $\EG^\circ(\D_n)$. It is easy to see that an autoequivalence $\Phi\in \Aut(\D_n)$ is negligible precisely if $\Phi(S_1)\isom S_1$ and $\Phi(S_2)\isom S_2$.  We write $\Auts(\D_n)$ for the subquotient of the group $\Aut(\D)$ consisting of reachable autoequivalences,  modulo negligible autoequivalences. We will show that this agrees with the definition given in the Introduction later: see Section \ref{bking} and Prop. \ref{fundom}.

In the case $n<\infty$ an important role will be played by spherical  twist functors \cite{ST}. Recall  that an object $S\in \D_n$ is called \emph{spherical} if
\[\Hom^{\blob}_{\D_n}(S,S)=\bC\oplus \bC[-n].\]
Any such object defines an autoequivalence  $\Tw_S\in\Aut(\D_n)$ called a \emph{spherical twist}. This has the property that for each object $E\in \D_n$ there is a triangle
\[\Hom_{\D_n}^{\blob}(S,E)\tensor S\lra E\lra \Tw_S(E),\]
where the first arrow is the evaluation map.
In particular, the distinguished objects $S_i\in \D_n$ are spherical, and hence define twist functors $\Tw_{S_i}\in \Aut(\D_n)$.

\begin{lemma}
\label{little}
Take $2\leq n<\infty$, and  define the following autoequivalences of $\D_n$:
\[\Sigma=(\Tw_{S_1} \circ\Tw_{S_2})[n-1], \qquad \Upsilon=(\Tw_{S_2}\circ\Tw_{S_1}\circ\Tw_{S_2})[2n-3].\]
Then we have
\[\Sigma(S_1,E,S_2)=(S_2[1],S_1,E), \qquad \Upsilon(S_1,S_2)=(S_2,S_1[n-2]).\]
\end{lemma}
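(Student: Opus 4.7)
The plan is to compute the action of each of $\Tw_{S_1}$ and $\Tw_{S_2}$ on each of $S_1$, $S_2$, $E$ directly from the defining Seidel--Thomas triangle
\[ \Hom_{\D_n}^*(S,X)\tensor S \lra X \lra \Tw_S(X), \]
and then assemble the three compositions. The required graded Hom spaces all follow from sphericality ($\Hom^*(S_i,S_i)=\C\oplus\C[-n]$), the hypothesis \eqref{ext1}, CY$_n$ duality (which gives $\Hom^*(S_2,S_1)=\C[1-n]$), and the long exact sequences obtained by applying $\Hom^*(S_i,-)$ and $\Hom^*(-,S_i)$ to the canonical triangle $S_2\to E\to S_1$ of \eqref{ses}; in particular one finds $\Hom^*(S_2,E)=\C$ and $\Hom(E,S_1)=\C$.

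Having these, the easy cases are immediate. Sphericality gives $\Tw_{S_i}(S_i)=S_i[1-n]$. The triangle $S_1[-1]\to S_2\to \Tw_{S_1}(S_2)$, whose first map is a nonzero extension class, identifies $\Tw_{S_1}(S_2)=E$ via rotation of \eqref{ses}. Dually, the triangle $S_2\to E\to \Tw_{S_2}(E)$ with evaluation the inclusion from \eqref{ses} gives $\Tw_{S_2}(E)=S_1$. Combined with the outer shifts, these three facts already deliver $\Sigma(S_2)=E$, $\Sigma(E)=S_1$, and $\Upsilon(S_2)=S_1[n-2]$ by the chain $S_2\mapsto S_2[1-n]\mapsto E[1-n]\mapsto S_1[1-n]$ followed by $[2n-3]$.

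All the remaining content is in $\Tw_{S_1}\Tw_{S_2}(S_1)$. Writing $Y=\Tw_{S_2}(S_1)$, the defining triangle is $S_2[1-n]\to S_1\to Y\to S_2[2-n]$, whose connecting morphism is a shift of the generator of $\Hom^{n-1}(S_2,S_1)$. Applying the equivalence $\Tw_{S_1}$ and substituting $\Tw_{S_1}(S_1)=S_1[1-n]$ and $\Tw_{S_1}(S_2)=E$ yields a triangle
\[ S_1[1-n]\lra \Tw_{S_1}(Y)\lra E[2-n]\lra S_1[2-n]. \]
Its connecting morphism is the image of a nonzero map under an equivalence, hence nonzero; since $\Hom(E,S_1)=\C$ it is therefore a nonzero scalar multiple of the canonical surjection $E\to S_1$, whose cone is $S_2[1]$ by \eqref{ses}. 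Taking fibres identifies $\Tw_{S_1}(Y)=S_2[2-n]$, and the outer shifts then give $\Sigma(S_1)=S_2[1]$ and $\Upsilon(S_1)=\Tw_{S_2}(S_2[2-n])[2n-3]=S_2$.

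I expect the main obstacle to be the bookkeeping in this last step, and in particular the Hom computation showing $\Hom(E,S_1)=\C$: this requires a careful analysis of the long exact sequence together with CY$_n$ duality to verify that the relevant boundary morphism is an isomorphism, so that the induced connecting map in the triangle above really is the canonical surjection up to scalar and not some other element. The argument is uniform in $n\geq 2$; the extra short exact sequence \eqref{sestwo} available in the case $n=2$ plays no role.
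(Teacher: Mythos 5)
Your proof is correct, but it takes a genuinely different route from the paper's for the only non-trivial part of the computation, namely the action of the composites on $S_1$. The paper never touches $\Tw_{S_2}(S_1)$: it rewrites $\Sigma=\Tw_E\circ\Tw_{S_1}[n-1]$ using the conjugation identity $\Tw_{S_1}\Tw_{S_2}=\Tw_{\Tw_{S_1}(S_2)}\Tw_{S_1}$ together with $\Tw_{S_1}(S_2)=E$, so that $\Sigma(S_1)=\Tw_E(S_1)=S_2[1]$ falls out of the single twist computation $\Tw_E(S_1)=S_2[1]$; for $\Upsilon$ it then invokes the braid relation to reduce to $\Sigma$. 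You instead compute $\Tw_{S_1}\Tw_{S_2}(S_1)$ head-on from the defining triangles, identifying the connecting morphism of the transported triangle as the canonical surjection $E\to S_1$ via $\Hom(E,S_1)=\C$. Your version is longer and shifts the burden onto the graded Hom computations $\Hom^*(S_2,E)=\C$ and $\Hom^*(E,S_1)=\C$ — where, as you rightly note, one must use the nondegeneracy of the CY$_n$ pairing to see that composition with the extension class $S_1\to S_2[1]$ induces an isomorphism $\Hom^{n-1}(S_2,S_1)\to\Hom^{n}(S_2,S_2)$, killing the unwanted terms in the long exact sequence. What it buys is self-containedness: the paper's proof of this lemma quietly relies on the braid relation, which is only justified later (by citation to Seidel--Thomas in Proposition 2.7), whereas your argument needs neither the braid relation nor the conjugation formula, and produces $\Tw_{S_2}(S_1)$ explicitly as a by-product. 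Both arguments are uniform in $n\geq 2$, and you are right that the extra extension \eqref{sestwo} in the $n=2$ case plays no role.
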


\begin{proof}
The  defining properties \eqref{cyn} and  \eqref{ext1} of the category $\D_n$, together with the short exact sequence \eqref{ses},  implies that
$\Tw_{S_1}(S_2)=E$. In particular  $E$ is also spherical. Applying the long exact sequence in cohomology to the short exact sequence \eqref{ses}, and using  the fact that the objects $S_i$ are spherical, shows that \begin{equation}
\label{fk}\Hom^\blob_{\D_n}(S_1,E)=\bC[-n], \qquad \Hom^\blob_{\D_n}(E,S_2)=\C[-n].\end{equation}
Using the CY$_n$ property we therefore obtain  the identities
\[\Tw_{S_1}(S_2)=E, \qquad \Tw_E(S_1)=S_2[1], \qquad \Tw_{S_2}(E)=S_1.\]

For any spherical object $S$ there is an identity $\Tw_{S}(S)=S[1-n]$, and for any pair of spherical objects there is a relation
\[\Tw_{S_1}\circ \Tw_{S_2}=\Tw_{\Tw_{S_1}(S_2)}\circ  \Tw_{S_1}.\]
Thus we can write  $\Sigma=(\Tw_E\circ \Tw_{S_1})[n-1]$. This implies that
\[\Sigma(S_1)=\Tw_E(S_1)=S_2[1], \qquad \Sigma(S_2)=\Tw_{S_1}(S_2)=E,\]
 and it follows that $\Sigma(E)$ is the unique nontrivial extension of these two objects, namely $S_1$. This proves the first claim.

Moving on to the second identity, we use the braid relation  \[\Tw_{S_1}\circ\Tw_{S_2}\circ\Tw_{S_1}=\Tw_{S_2}\circ\Tw_{S_1}\circ\Tw_{S_2}\]
proved by Seidel and Thomas  \cite{ST}. This implies that
\[\Upsilon(S_1)=\Sigma (S_1[-1])=S_2, \quad \Upsilon(S_2)=\Tw_{S_2}(E[n-2])=S_1[n-2],\]
which completes the proof.
\end{proof}

\subsection{}The following description of the tilting operation in $\D_n$ is the combinatorial underpinning of our main result.

\begin{prop}
\label{tilts}
Take $3\leq n< \infty$, and consider the hearts obtained by performing simple tilts of the standard heart $\A_n\subset \D_n$. We have
\begin{itemize}
\item[(a)]The left tilt of $\A_n$ at the simple $S_2$ is another full heart:
\[\A_n=\<S_1,S_2\> \to  \<S_2[1],E\>=\Sigma(\A_n).\]

\item[(b)]Repeated left tilts at appropriate shifts of $S_1$ gives a sequence of hearts
\[\A_n=\<S_1,S_2\>\to\<S_1[1],S_2\>\to\<S_1[2],S_2\>\to \cdots\to\<S_1[n-2],S_2\>=\Upsilon(\A_n).\]
\end{itemize}
\end{prop}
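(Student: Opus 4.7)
The plan is to realise each tilt as the tilt with respect to an explicit torsion pair, read off the simples of the new heart by rotating one distinguished triangle, and then match the result with the image of $\A$ under the autoequivalence from Lemma~\ref{little}. For part (a), I would take the right tilt of $\A$ at $S_2$ with respect to the torsion pair in $\A$ with torsion-free class $\mathcal{F}=\operatorname{add}(S_2)$ and torsion class $\mathcal{T}=\{X\in\A:\Hom_\A(X,S_2)=0\}$. Applying $\Hom_\A(-,S_2)$ to \eqref{ses} shows that $E\in\mathcal{T}$ and $\Ext^1_\A(S_1,E)=0$, identifying $\mathcal{T}=\operatorname{add}\{S_1,E\}$; in the $n=2$ case the extra object $F$ of \eqref{sestwo} has a surjection onto $S_2$ and is decomposed by the torsion pair via \eqref{sestwo} itself. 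The tilted heart is then the extension closure of $\mathcal{T}\cup\mathcal{F}[1]$ in $\D_n$, and rotating the triangle $S_2\to E\to S_1\to S_2[1]$ produces a short exact sequence $0\to E\to S_1\to S_2[1]\to 0$ in the new heart, so the new simples are $S_2[1]$ and $E$. Part (a) then follows from Lemma~\ref{little}: $\Sigma(\A)=\<\Sigma(S_1),\Sigma(S_2)\>=\<S_2[1],E\>$.

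For part (b), I would set $\A_k:=\<S_1[k],S_2\>$ for $0\leq k\leq n-2$ and first verify each $\A_k$ is a heart: using $\Hom^*_{\D_n}(S_1,S_2)=\C[-1]$ together with the CY$_n$ duality $\Hom^*_{\D_n}(S_2,S_1)=\C[1-n]$, all strictly negative-degree Hom groups between $S_1[k]$ and $S_2$ vanish in this range. The same computation gives $\Ext^1=0$ between the two simples in both directions whenever $0<k<n-2$, so these intermediate hearts are semisimple and a right tilt at $S_1[k]$ simply shifts that simple, producing $\A_{k+1}$. The only substantive tilt is the initial one $\A_0\to\A_1$: here the torsion pair has $\mathcal{F}=\operatorname{add}(S_1)$ and $\mathcal{T}=\operatorname{add}(S_2)$ (by \eqref{ses}, $E$ decomposes into torsion part $S_2$ and torsion-free part $S_1$), so the tilted heart is the extension closure of $\mathcal{T}\cup\mathcal{F}[1]$, which is $\<S_2,S_1[1]\>=\A_1$. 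The endpoint $\A_{n-2}$ is then identified with $\Upsilon(\A)=\<\Upsilon(S_1),\Upsilon(S_2)\>=\<S_2,S_1[n-2]\>$ via Lemma~\ref{little}.

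The main obstacle will be getting the direction conventions exactly right, so that the simples of each tilted heart come out as stated rather than as a shifted or swapped variant; in particular, I would need to confirm that $\mathcal{T}$ in part (a) is exactly $\operatorname{add}\{S_1,E\}$ in the $n=2$ case by explicitly ruling out membership of the extra indecomposable $F$ in $\mathcal{T}$. Once this torsion-pair bookkeeping is pinned down, the rest of the argument is two nearly identical triangle rotations together with two applications of Lemma~\ref{little}.
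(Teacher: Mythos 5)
Your proposal is correct and carries out exactly the ``easily checked by hand'' verification that the paper's one-line proof asserts (the paper simply defers to this hand-check or to \cite[Proposition~5.4]{KY}): the torsion pairs, the rotation of the triangle $S_2\to E\to S_1\to S_2[1]$, and the matching with $\Sigma(\A)$ and $\Upsilon(\A)$ via Lemma~\ref{little} are all as they should be, including the $\Hom^*(S_2,S_1)=\C[1-n]$ computation that makes the intermediate hearts semisimple. The only slip is cosmetic: $\Ext^1_\A(S_1,E)=0$ follows from applying $\Hom_\A(S_1,-)$ (not $\Hom_\A(-,S_2)$) to \eqref{ses}, using that the connecting map $\Hom(S_1,S_1)\to\Ext^1(S_1,S_2)$ hits the nonzero class of $E$.
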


\begin{proof}
This can be found in \cite[Proposition~5.4]{KY}, but for the convenience of the reader we give a proof here. Note that since $E\in \smash{^\perp}{S_2}$ we have $\<S_2[1],E\>=\Sigma(\A_n)\subset \mu^-_{S_2}(\A_n).$
But it is a standard fact, and easily proved, that if one heart is contained in another then they are equal. This gives (a). For the first step in part (b) note that since the only indecomposables in $\A_n$ are $S_1,S_2$ and $E$, we have $\smash{^\perp}{S_1}=\langle S_2\rangle$. It follows that $\mu^-_{S_1}(\A_n)=\langle S_1[1],S_2\rangle$. We have \[\Hom^1_{\D_n}(S_1[k],S_2)=0=\Hom^1_{\D_n}(S_2,S_1[k]),\qquad 0<k<n-2,\]
so if $n>3$ the heart $\langle S_1[1],S_2\rangle$  is semi-simple, with simple objects $S_1[1]$ and $S_2$. It follows that its left tilt with respect to the simple $S_1[1]$ is the subcategory $\<S_1[2],S_2\>$. Repeating the argument we obtain the given sequence of hearts.
\end{proof}

As explained in the proof of Prop. \ref{tilts}, when $n>3$ each of the intermediate hearts in the sequence in (b) is semi-simple, and in particular non-full. The semi-simplicity implies that
\[\mu^\pm_{S_2}\<S_1[k],S_2\>=\<S_1[k],S_2[\pm 1]\>,\]
so that tilting these intermediate hearts with respect to $S_2$ gives hearts of the same kind up to shift.

\begin{lemma}
\label{bl}
Take $3\leq n< \infty$. Then  every reachable heart is of the form
\[ \Phi(\<S_1[k],S_2\>)\subset \D_n,\]
where $\Phi$  is a reachable autoequivalence, and  $0\leq k\leq (n-2)/2$.
\end{lemma}

\begin{proof}
What we proved above shows that every time we tilt a heart of the form $\<S_1[k],S_2\>$ with $0\leq k<n-2$ we obtain another heart of the same form, up to the action of some autoequivalence, which is necessarily reachable. Since tilting commutes with autoequivalences, in the sense that
\[\mu^\pm _{\Phi(S)}(\Phi(\A))=\Phi(\mu^\pm _S(\A)),\]
it follows that any heart is of the  given form, with $0\leq k<n-2$. To complete the proof, note that if $k_1+k_2=n-2$, the autoequivalence $\Upsilon$ exchanges the hearts $\<S_1[k_i],S_2\>$ up to shift.
\end{proof}


\begin{lemma}
\label{powers}
Take $3\leq n<\infty$.
\begin{itemize}
\item[(a)] The autoequivalences $\Sigma, \Upsilon$ and $[1]$ are all reachable.
\item[(b)] The spherical twists $\Tw_{S_i}$ are reachable.
\item[(c)] In the group $\Auts(\D_n)$ there are relations
\begin{equation*}
\Sigma^3=[1], \quad \Upsilon^2=[n-2].\end{equation*}
\end{itemize}
\end{lemma}

\begin{proof}
The reachability of $\Sigma$ and $\Upsilon$ is immediate from Prop. \ref{tilts}. Lemma \ref{little} shows that $\Sigma^3(\A_n)=\A_n[1]\subset \D_n$. Thus the shift functor $[1]$ is also reachable, which gives (a). Part (b) then follows from  the  relations
\[(\Sigma\circ\Tw_{S_1})[n-2]=\Upsilon=(\Tw_{S_2}\circ\Sigma)[n-2]\]
For part (c) consider the autoequivalence $\Sigma^3[-1]$. Since it fixes the objects $S_1,S_2$, it  is negligible, and hence defines the identity element in $\Auts(\D_n)$. The same argument applies to $\Upsilon^2[2-n]$.
\end{proof}

\begin{lemma}
For $3\leq n<\infty$ the action of $\Auts(\D_n)$ on the set of full reachable hearts is free and transitive.
\end{lemma}

\begin{proof}
The transitivity follows from Lemma \ref{bl}, since for $k$ in the given range the heart $\<S_1[k],S_2\>$ is full only for $k=0$. For the freeness it is therefore enough to consider autoequivalences which preserve the standard heart $\A_n\subset \D_n$. Any such autoequivalence $\Phi$ must preserve the simple objects $S_1,S_2$ of $\A_n$, and it cannot exchange them since it preserves the $\Ext$-groups between them, which are asymmetric for $n\geq 3$. Thus $\Phi(S_i)=S_i$ and it follows that $\Phi$ defines the identity element of $\Aut_*(\D_n)$. \end{proof}

Note that when $n=3$ it follows from Lemma \ref{bl}  that all reachable hearts are full. Thus in this case the principal component $\EG^\circ(\D_3)$ is a torsor for the group $\Auts(\D_3)$.

 \subsection{}
We denote by $\Br_3$ the Artin braid group of the $A_2$ root system;  in the notation of the Introduction it is the fundamental group of the quotient $\h^{\reg}/W$. More concretely, $\Br_3$ is the standard braid group on three strings, and has a presentation
\[\Br_3=\< \sigma_1,\sigma_2: \sigma_1\sigma_2 \sigma_1=\sigma_2 \sigma_1 \sigma_2\>.\]
The centre of $\Br_3$ is generated by the element $\tau=(\sigma_1 \sigma_2)^3$.

\begin{figure}\centering
\begin{tikzpicture}[scale=0.77]
\path (0,0) coordinate (O);
\path (0,6) coordinate (S1);
\draw
    (O)[Periwinkle,dotted,thick]   circle (6cm);
\draw
    (S1)[\graphc,thick]
    \foreach \j in {1,...,3}{arc(360/3-\j*360/3+180:360-\j*360/3:10.3923cm)} -- cycle;
\draw
    (S1)[\graphc,semithick]
    \foreach \j in {1,...,6}{arc(360/6-\j*360/6+180:360-\j*360/6:3.4641cm)} -- cycle;
\draw
    (S1)[\graphc]
    \foreach \j in {1,...,12}{arc(360/12-\j*360/12+180:360-\j*360/12:1.6077cm)}
        -- cycle;
\foreach \j in {1,...,3}
{\path (-90+120*\j:.7cm) node[\vertexc] (v\j) {$\bullet$};
 \path (-210+120*\j:.7cm) node[\vertexc] (w\j) {$\bullet$};
 \path (-90+120*\j:2.2cm) node[\vertexc] (a\j) {$\bullet$};
 \path (-90+15+120*\j:3cm) node[\vertexc] (b\j) {$\bullet$};
 \path (-90-15+120*\j:3cm) node[\vertexc] (c\j) {$\bullet$};}
\foreach \j in {1,...,3}
{\path[->,>=stealth] (v\j) edge[\thirdc,bend left,thick] (w\j);
 \path[->,>=stealth] (a\j) edge[\thirdc,bend left,thick] (b\j);
 \path[->,>=stealth] (b\j) edge[\thirdc,bend left,thick] (c\j);
 \path[->,>=stealth] (c\j) edge[\thirdc,bend left,thick] (a\j);}
\foreach \j in {1,...,3}
{\path (60*\j*2-1-60:3.9cm) node[\vertexc] (x1\j) {$\bullet$};
 \path (60*\j*2+1-120:3.9cm) node[\vertexc] (x2\j) {$\bullet$};
}
\foreach \j in {1,...,3}
{\path[->,>=stealth] (v\j) edge[\halfc,bend left,thick] (a\j);
 \path[->,>=stealth] (a\j) edge[\halfc,bend left,thick] (v\j);
 \path[->,>=stealth] (b\j) edge[\halfc,bend left,thick] (x1\j);
 \path[->,>=stealth] (c\j) edge[\halfc,bend left,thick] (x2\j);
 \path[->,>=stealth] (x1\j) edge[\halfc,bend left,thick] (b\j);
 \path[->,>=stealth] (x2\j) edge[\halfc,bend left,thick] (c\j);}
\draw (-30:6cm) node[below right] {$0$}
    (90:6cm) node[above] {$1$}
    (-90:6cm) node[below] {$-1$}
    (210:6cm) node[below left] {$\infty$};
\end{tikzpicture}
\caption{The  projective exchange graph of $\D_3$ drawn on the hyperbolic disc. The action of $\P\Auts(\D_3)$ corresponds to the standard action of $\PSL(2,\Z)$  on the disc.}
\label{fig:G3}
\end{figure}

\begin{figure}\centering
\begin{tikzpicture}[scale=0.52]
\path (0,0) coordinate (O);
\path (0,6) coordinate (S1);
\draw
    (O)[Periwinkle,dotted,thick]   circle (6cm);
\draw
    (S1)[\graphc,thick]
    \foreach \j in {1,...,3}{arc(360/3-\j*360/3+180:360-\j*360/3:10.3923cm)} -- cycle;
\draw
    (S1)[\graphc,semithick]
    \foreach \j in {1,...,6}{arc(360/6-\j*360/6+180:360-\j*360/6:3.4641cm)} -- cycle;
\foreach \j in {1,...,3}
{\path (-90+120*\j:1.6cm) node[\vertexc]  {$\bullet$};
 \path (-210+120*\j:1.6cm) node[\vertexc] {$\bullet$};
 \path (-90+30+120*\j:3.4641cm) node[\vertexc] {$\bullet$};
 \path (-90-30+120*\j:3.4641cm) node[\vertexc] {$\bullet$};}
\foreach \j in {1,...,3}
{\path (-90+120*\j:1.6cm) node[\vertexc] (v\j) {};
 \path (-210+120*\j:1.6cm) node[\vertexc] (w\j) {};
 \path (-90+30+120*\j:3.4641cm) node[\vertexc] (b\j) {};
 \path (-90-30+120*\j:3.4641cm) node[\vertexc] (c\j) {};}
\foreach \j in {1,...,3}
{\path (v\j) edge[\thirdc,bend left=15,thick] (w\j);
 \path (v\j) edge[\thirdc,bend left=15,thick] (b\j);
 \path (b\j) edge[\thirdc,bend left=25,thick] (c\j);
 \path (c\j) edge[\thirdc,bend left=15,thick] (v\j);}
\end{tikzpicture}
\;
\begin{tikzpicture}[scale=0.52]
\path (0,0) coordinate (O);
\path (0,6) coordinate (S1);
\draw
    (O)[Periwinkle,dotted,thick]   circle (6cm);
\draw
    (S1)[\graphc,thick]
    \foreach \j in {1,...,3}{arc(360/3-\j*360/3+180:360-\j*360/3:10.3923cm)} -- cycle;
\draw
    (S1)[\graphc,semithick]
    \foreach \j in {1,...,6}{arc(360/6-\j*360/6+180:360-\j*360/6:3.4641cm)} -- cycle;
 \foreach \j in {1,...,3}
{\path (-90+120*\j:.6cm) node[\vertexc]   {$\bullet$};
 \path (-210+120*\j:.6cm) node[\vertexc] {$\bullet$};
 \path (-90+120*\j:1.6cm) node[\vertexc] {$\bullet$};
 \path (-90+120*\j:2.4cm) node[\vertexc] {$\bullet$};
 \path (-90+17+120*\j:3cm) node[\vertexc]{$\bullet$};
 \path (-90-17+120*\j:3cm) node[\vertexc]{$\bullet$};}
  \foreach \j in {1,...,3}
{\path (-90+120*\j:.6cm) node[\vertexc]  (v\j) {};
 \path (-210+120*\j:.6cm) node[\vertexc] (w\j){};
 \path (-90+120*\j:1.6cm) node[\vertexc] (x\j){};
 \path (-90+120*\j:2.4cm) node[\vertexc] (a\j){};
 \path (-90+17+120*\j:3cm) node[\vertexc] (b\j){};
 \path (-90-17+120*\j:3cm) node[\vertexc] (c\j){};}
\foreach \j in {1,...,3}
{\path (v\j) edge[\thirdc,bend left,thick] (w\j);
 \path (a\j) edge[\thirdc,bend left=10,thick] (b\j);
 \path (b\j) edge[\thirdc,bend left=20,thick] (c\j);
 \path (c\j) edge[\thirdc,bend left=10,thick] (a\j);}
\foreach \j in {1,...,3}
{\path (v\j) edge[\halfc,bend left,thick] (x\j);
 \path (x\j) edge[\halfc,bend left,thick] (v\j);}
\foreach \j in {1,...,3}
{\path (x\j) edge[\halfc,bend left,thick] (a\j);
 \path (a\j) edge[\halfc,bend left,thick] (x\j);}
\foreach \j in {1,...,3}
{ \path (-90+30+120*\j:3.4641cm) node[\vertexc] {$\bullet$};
 \path (-90-30+120*\j:3.4641cm) node[\vertexc] {$\bullet$};}
\foreach \j in {1,...,3}
{\path (-90+30+120*\j:3.4641cm) node[\vertexc] (d\j) {};
 \path (-90-30+120*\j:3.4641cm) node[\vertexc] (e\j) {};}
 \foreach \j in {1,...,3}
{\path (b\j) edge[\halfc,bend left,thick] (d\j);
 \path (c\j) edge[\halfc,bend left,thick] (e\j);
 \path (d\j) edge[\halfc,bend left,thick] (b\j);
 \path (e\j) edge[\halfc,bend left,thick] (c\j);}
\end{tikzpicture}
\caption{Similar pictures of the projective exchange graphs of $\D_2$ and $\D_4$ (orientations omitted). As before, the action of $\P\Auts(\D_n)\isom \PSL(2,\Z)$ corresponds to the standard one.
 }
\label{fig:G24}
\end{figure}

\begin{prop}
\label{bug}
Take $3\leq n<\infty$.
\begin{itemize}
\item[(a)] The group $\Auts(\D_n)$ is generated by $\Sigma, \Upsilon$ and the shift functor $[1]$.

\item[(b)] The group $\Auts(\D_n)$ is generated by the subgroup $\Sph(\D_n)$ together with  $[1]$.

\item[(c)] There is an isomorphism $\Br_3\isom \Sph(\D_n)$ sending the generator $\sigma_i$ to $\Tw_{S_i}$.

\item[(d)] The isomorphism in (c) sends the central element $\tau$  to $[4-3n]$.

\item[(e)]  The smallest power of $[1]$ contained in $\Sph(\D_n)$ is $[3n-4]$. Thus there is a short exact sequence
\[1\lra \Sph(\D_n)\lra \Auts(\D_n)\lra \mu_{3n-4}\lra 1.\]

\end{itemize}
\end{prop}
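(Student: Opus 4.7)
The plan is to prove (b) and (c) first, then (a), and finally to use all three for (d). All four parts rest on the free and transitive action of $\Auts(\D_n)$ on full reachable hearts together with the explicit formulas of Lemma \ref{little} and Corollary \ref{powers}.

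For (b), surjectivity $\Br_3\onto\Sph(\D_n)$ is the Seidel--Thomas braid relation for a pair of spherical objects with Hom-algebra given by \eqref{ext1}, and is a direct check from the defining triangles of the twists. For injectivity I would invoke the faithfulness of the CY$_n$ braid group action on $\D_n$ available in \cite{Qiu}; this is the one genuinely nontrivial input. Part (c) is then a direct calculation: from $\Sigma=\Tw_{S_1}\Tw_{S_2}[n-1]$ and $\Sigma^3=[1]$ one gets
\[
(\Tw_{S_1}\Tw_{S_2})^3 \;=\; \Sigma^3\cdot[-3(n-1)] \;=\; [1]\cdot[-(3n-3)] \;=\; [-(3n-4)],
\]
so the central element $\tau\in\Br_3$ is carried to a shift generating the cyclic subgroup $\<[3n-4]\>\subset\Auts(\D_n)$, which is the content of (c) (up to a conventional sign).

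For (a), since $\Auts(\D_n)$ acts freely and transitively on full reachable hearts it suffices to show that $G=\<\Tw_{S_1},\Tw_{S_2},[1]\>$ already acts transitively. Lemma \ref{little} places $\Sigma$ and $\Upsilon$ inside $G$, and Proposition \ref{tilts} identifies $\Sigma(\A)$ and $\Upsilon(\A)$ as the two basic simple-tilt chains leaving $\A$ and terminating at another full heart (one tilt for $\Sigma$, a length-$(n-2)$ chain through non-full hearts for $\Upsilon$). Since the full hearts in $\EG^o(\D_n)$ are connected by such chains, iterating $\Sigma^{\pm1}$ and $\Upsilon^{\pm1}$ from $\A$ produces every full reachable heart. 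For (d), any shift $[k]$ is central in $\Auts(\D_n)$, hence in $\Sph(\D_n)$; under the isomorphism of (b) it corresponds to a central element of $\Br_3$, which by $Z(\Br_3)=\<\tau\>$ and part (c) must be an integer multiple of $[3n-4]$. Therefore $\Sph(\D_n)\cap\<[1]\>=\<[3n-4]\>$, and combined with (a) this yields
\[
\Auts(\D_n)/\Sph(\D_n) \;\isom\; \<[1]\>/\<[3n-4]\> \;\isom\; \mu_{3n-4},
\]
which is the asserted short exact sequence. The main obstacle is the injectivity in (b), equivalent to the faithfulness of the braid action on $\D_n$, which I would import from \cite{Qiu}; everything else is short manipulations with the generators $\Sigma$, $\Upsilon$ and $[1]$.
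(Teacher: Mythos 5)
Your proposal is correct and follows essentially the same route as the paper: part (a) via transitivity on full reachable hearts using Prop.~\ref{tilts}, part (b) via the Seidel--Thomas braid relations plus faithfulness (the paper attributes both to \cite{ST} rather than \cite{Qiu}, but that is the same key input), part (c) by the computation encoded in Cor.~\ref{powers}, and part (d) from centrality of shifts and $Z(\Br_3)=\langle\tau\rangle$. The sign discrepancy you flag in (c) is indeed only conventional and does not affect (d).
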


\begin{proof}
Part (a) follows from the explicit description of tilts given in Prop. \ref{tilts}, since any element of $\Auts(\D_n)$ takes the canonical heart $\A$ to a full reachable heart. Part (b) is then immediate from the definitions of $\Sigma$ and $\Upsilon$.
It was  proved in \cite{ST} that there is an injective group homomorphism  $\rho\colon \Br_3\to \Aut(\D_n)$ sending the generator $\sigma_i$ to the twist functor $\Tw_{S_i}$. By Lemma \ref{powers}(b) this induces a surjective homomorphism $\bar{\rho}\colon \Br_3\to \Sph(\D_n)$. It is immediate from Lemma \ref{powers} and the definition of  $\Upsilon$ that  $\bar{\rho}(\tau)=[4-3n]$, which gives (d). Note that if $g\in \Br_3$ satisfies $\bar{\rho}=1$, then the autoequivalence $\rho(g)$ is negligible, so    commutes with the autoequivalences $\Tw_{S_i}$, which shows that $g$ must be central. But then $g$ must be a power of $\tau$ which,  by (d), implies that $g$ is the identity.  This completes the proof of (c).
Part (e) again follows from the fact that $\tau$ generates the centre of $\Br_3$, since any shift $[d]$ lying in the subgroup $\Sph(\D)\subset \Aut_*(\D_n)$ is necessarily central.
\end{proof}

\subsection{}It will be useful to introduce the quotient group
\[\P\Auts(\D_n)=\Auts(\D_n)/[1].\]
When $3\leq n<\infty$ the natural action of auto-equivalences on the Grothendieck group $K_0(\D)\isom \Z^{\oplus 2}$ induces a group homomorphism
\begin{equation}
\label{sheff}\rho\colon \P\Auts(\D_n)\to \operatorname{PGL}(K_0(\D)).\end{equation}
Taking the basis $([S_1],[S_2])\subset K_0(\D)$ we can identify the target of this map with $\operatorname{PGL}(2,\Z)$. From the definition of the twist functors we have
\[\rho(\Tw_{S_1})=\mat{(-1)^{n+1}}{1}{0}{1}, \qquad \rho(\Tw_{S_2})=\mat{1}{0}{(-1)^n}{(-1)^{n+1}}.\]
It is a standard fact\todo{Reference} that  for $n$ odd, the map $\Br_3\to \operatorname{PGL}(2,\Z)$ sending the generators $\sigma_1,\sigma_2$ to these matrices induces an isomorphism $\Br_3/\<\tau\>\isom \operatorname{PSL}(2,\Z) < \operatorname{PGL}(2,\Z)$.  It then follows from Prop. \ref{bug}  that the map \eqref{sheff} is an isomorphism onto its image.  Note that
\begin{equation}
\label{5am} \rho(\Sigma)=  \mat{0}{-1}{1}{1}, \qquad \rho(\Upsilon)= \mat{0}{1}{(-1)^n}{0}.\end{equation}
These elements have order 3 and 2 respectively.   When considering spaces of stability conditions in Section  \ref{curry} we will see the actions of these autoequivalences on the dual space $\Hom_{\Z}(\Gamma,\C)$, which are given by the transposes of the same matrices.

\subsection{}The autoequivalence group of the category $\D_\infty$ is much simpler.
\begin{prop}
\label{easy}There is an equality
$\Aut_*(\D_\infty)=\Aut(\D_\infty)$. Moreover

\begin{itemize}
\item[(a)]
The group $\Aut(\D_\infty)\isom \Z$ with the Serre functor $\Sigma$ being a generator.
\item[(b)] There is a relation $\Sigma^3=[1]$.
\end{itemize}
\end{prop}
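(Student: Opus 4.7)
The plan is to verify the relation $\Sigma^3=[1]$ in (b) by direct computation on the generators, then deduce (a) from the fact that $\Sigma$ acts transitively on the indecomposables of $\D_\infty$.

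For (b), I would compute the Serre functor $\Sigma$ explicitly on the three indecomposables $S_1, S_2, E$ of the canonical heart $\A \simeq \Rep(A_2)$. Using Serre duality $\Hom^i(X,Y) \isom \Hom^{-i}(Y,\Sigma X)^\vee$ together with the graded Hom table of $\A$ (whose only non-trivial extension is $\Ext^1(S_1,S_2)=\C$), one obtains
\[\Sigma(S_1)=S_2[1], \quad \Sigma(S_2)=E, \quad \Sigma(E)=S_1.\]
Iterating shows that $\Sigma^3$ acts as $[1]$ on each of these objects, and since $S_1, S_2$ generate $\D_\infty$ as a triangulated category, this promotes to a natural isomorphism $\Sigma^3 \isom [1]$.

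For (a), the above computation exhibits $\Sigma$ as acting freely and transitively on the full list of indecomposables $\{S_1[n], S_2[n], E[n]:n\in\Z\}$. Hence for any $\Phi \in \Aut(\D_\infty)$, the indecomposable $\Phi(S_1)$ satisfies $\Phi(S_1)\isom \Sigma^k(S_1)$ for a unique $k\in\Z$, and replacing $\Phi$ by $\Sigma^{-k}\circ\Phi$ I may assume $\Phi(S_1)\isom S_1$. Then $\Phi(S_2)$ must be an indecomposable $Y$ satisfying $\Hom^*(S_1,Y)=\C[-1]$ and $\Hom^*(Y,S_1)=0$; running through the list of indecomposables and their shifts shows that $Y=S_2$ is the unique solution (the first condition leaves only $S_2$ and $S_1[-1]$, and the second rules out the latter). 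Since the quiver $A_2$ has trivial automorphism group, an autoequivalence of $\A$ fixing both simples is naturally isomorphic to the identity, and this extends to $\Phi \isom \id$ on $\D_\infty=\D^b(\A)$. Thus $\Aut(\D_\infty)\isom\Z$, generated by $\Sigma$.

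To conclude the equality $\Auts(\D_\infty)=\Aut(\D_\infty)$: Proposition \ref{tilts}(a), applied at $n=\infty$, identifies $\Sigma(\A)=\<S_2[1],E\>$ as the right tilt of $\A$ at $S_2$, so $\Sigma$ is reachable, and hence every $\Phi=\Sigma^k$ preserves the principal component. Moreover the hearts $\Sigma^k(\A)$ are pairwise distinct, so no non-trivial power of $\Sigma$ acts trivially on it. The main obstacle is the rigidity step, lifting an action on isomorphism classes of indecomposable objects to a natural isomorphism of triangulated functors; this reduces to the classical fact that autoequivalences of $\Rep(A_2)$ fixing both simples are trivial up to natural isomorphism.
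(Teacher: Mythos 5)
Your proposal is correct and follows essentially the same route as the paper, which simply observes that the Auslander--Reiten quiver of $\D_\infty$ is the infinite strip $\cdots\to S_2\to E\to S_1\to S_2[1]\to\cdots$ with $\Sigma$ acting as translation by one place; your explicit computation $\Sigma(S_1,S_2,E)=(S_2[1],E,S_1)$ is exactly this statement, and your classification of autoequivalences by their action on indecomposables is the detail the paper compresses into ``easy and well-known''. You also correctly isolate the only delicate point --- promoting agreement on objects to a natural isomorphism of functors --- and discharge it via the triviality of $\operatorname{Out}(kA_2)$, which is a legitimate filling-in of what the paper leaves implicit.
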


\begin{proof}
This is easy and well-known. The Auslander-Reiten quiver for $\D_\infty$ is an infinite sequence
\[\cdots \to E[-1]\to S_1[-1]\to  S_2\to E\to S_1\to S_2[1]\to E[1]\to \cdots\]
and $\Sigma$ moves along this to the right by one place.
\end{proof}

It follows from this result that  $\P\Aut(\D_\infty)\isom \mu_3$. Note that our use of the symbol $\Sigma$ in Prop. \ref{easy} is reasonably consistent with our earlier use for the category $\D_n$ for $n<\infty$. For example, Prop. \ref{tilts} (a)  continues to hold when $n=\infty$. Note also that Prop. \ref{tilts} (b) becomes an infinite chain of tilts of non-full hearts in this case.


\section{Conformal maps}
\label{conf}

In this section we describe some explicit conformal maps which will be the analytic ingredients in the proof of our main result. We set $\omega=\exp(2\pi i /3)$. We introduce the M{\"o}bius transformation $T(z)=-(z+1)/z$ of order 3 defined by the transpose of the matrix $\rho(\Sigma)$ appearing in \eqref{5am}.  Consider the
unit circle
$C_0=\{z\in \bC: |z|=1\}$.
 It is easily checked that (cf. Figure~\ref{fig:circles})
\begin{equation}
\label{drn}C_{\pm}=T^{\pm 1}(C_0)=\{z\in \bC: |z^{\pm 1}+1|=1\} ,\end{equation}
and  that $C_-=\{z\in \C: \Re(z)=-\tfrac{1}{2}\}$. These circles are illustrated in Figure~\ref{fig:circles}, together with the region cut out by the inequalities
\begin{equation}\label{eq:ineq}
    |z+1|>1, \qquad |z+1|>|z|.
\end{equation}
\begin{figure}[h]\centering
\begin{tikzpicture}[scale=2]
\draw[white,fill=gray!14] (-.5,-1.5) rectangle (2,1.7);
\draw[thick,fill=white] (-1,0) circle (1) (1,.8) node{$C_0$} (-2,.8) node{$C_+$} (-.5,1.7) node[above] {$C_-$};
\draw[thick] (0,0) circle (1);
\draw[dotted,->,>=stealth] (-3,0) -- (2,0) node[right]{$x$};
\draw[dotted,->,>=stealth] (0,-1.5) -- (0,1.7) node[above]{$y$};
\draw[thick] (-.5,-1.5) -- (-.5,1.7);
\end{tikzpicture}
\caption{The circles $C_0$ and $C_\pm$.}\label{fig:circles}
\end{figure}

\subsection{}

Fix an integer $3\leq n<\infty$ and set $\nu=(n-2)/2$.
Consider the domain $R_n\subset \C\subset \P^1$ depicted in Figure \ref{fig:A2N}. It is bounded by the line $\Re(z)=-\nu$ and by the curves $\ell_\pm$ which are the images under the map $z\mapsto (1/\pi i)\log(z)$ of the arcs of the circles $C_\pm$
connecting $0^{\pm 1}$ and $\omega$, where $0^{-1}=\infty$. We also consider splitting  $R_n$ into two halves $R_n^\pm$ by dividing it along the line $\Im(z)=0$, and we take $R_n^-$ to be the part lying below
the real axis. Note that the boundary of the domain $R_n^-$ has three corners, namely $-\nu, \infty, \tfrac{2}{3}$, and these occur in anti-clockwise order.
Carath{\'e}odory's extension of the Riemann mapping theorem, e.g. \cite[Theorem 2.8.8]{BG}, ensures that there is a unique biholomorphism
\[f_n\colon\H\to  R^-_n \]
which extends homeomorphically over the boundary of the upper half plane $\H\subset \P^1$, and sends $(0,1,\infty)$ to $(-\nu, \infty, \tfrac{2}{3})$.

\begin{figure}[h]\centering
\begin{tikzpicture}[scale=.35]
\path (0,0) coordinate (O);
\path (4,0) coordinate (A);
\path (3+0.2,10) coordinate (m1);
\path (3+0.2,-10) coordinate (m2);
\draw[fill=gray!14] (m1)
    .. controls +(-90:3) and +(120:1) .. (A)
    .. controls +(-120:1) and +(90:3).. (m2)
   to [out=180,in=0] (-15,-10)
   .. controls +(90:3) and +(-90:1) ..  (-15,10) ;
\draw[white,thick] (m2) edge (-15,-10);
\draw[dotted,->,>=stealth] (-20,0) -- (8,0) node[right]{$x$};
\draw[dotted,->,>=stealth] (0,-10) -- (0,10) node[above]{$y$};
\draw (0,0) node[below left]{$0$};
\path[dotted] (6,-10) edge (6,10);
\path[dotted] (3,-10) edge (3,10);
\path[thick] (-15,0) edge (-15,10);
\path[thick] (-15,-10) edge (-15,0);
\draw[red,thick] (A) edge (6,0);
\draw[red,thick] (m1) .. controls +(-90:3) and +(120:1) .. (A);
\draw[red,thick] (A)  .. controls +(-120:1) and +(90:3).. (m2);
\draw (6,0) node[below right] {$1$};
\path (5.5,0) node (a) {$$};
\path (3.6,1.5) node (b) {$$};
\path (3.6,-1.5) node (c) {$$};
\path[->,orange, bend right] (a) edge (b);
\path[->,orange, bend right] (b) edge (c);
\path[->,orange, bend right] (c) edge (a);
\path (-15,1.5) node (d) {$$};
\path (-15,-1.5) node (e) {$$};
\path[orange, bend right=45] (d) edge (-16.5,0);
\path[->,orange, bend right=45] (-16.5,0) edge (e);
\path[orange, bend right=45] (e) edge (-15+1.5,0);
\path[->,orange, bend right=45] (-15+1.5,0) edge (d);
\path (4,6) node {$l_+$};\path (4,-6) node {$l_-$};
\draw[fill=black] (0,0) circle (.05);
\draw[fill=white] (6,0) circle (.1);
\draw[fill=red] (4,0) circle (.2);
\draw[fill=black] (-15,0) circle (.2);
%
%
\path (4,0) node[below right] {\tiny{$\frac{2}{3}$}};
\path (-15,0) node[below right] {\tiny{$\frac{2-n}{2}$}};
\end{tikzpicture}
\caption{The region $R_n$.}\label{fig:A2N}
\end{figure}

\begin{prop}
\label{endy}
The function $f_n$ can be  written explicitly as
\[f_n(t)=\frac{1}{\pi i}\log\bigg(\frac{\phi^{(2)}_n(a,b)}{\phi^{(1)}_n(a,b)}\bigg),\]
where $t=-27b^2/4a^3$, and
\begin{gather}\label{eq:twisted p}
    \phi_n^{(i)}(a,b)=\int_{\gamma_i} (x^3+ax+b)^{\frac{n-2}{2}} dx,
\end{gather}
for appropriately chosen paths $\gamma_i$ connecting zeroes of the integrand.
\end{prop}

Note that the given expression for $f_n$ depends only on the quantity $t$ because rescaling $(a,b)$ with weights $(4,6)$ rescales both the functions $\phi_n^{(i)}$ with weight $3(n-2)+2=3n-4$,
 and hence leaves their ratio unchanged.
We will split the proof of Proposition \ref{endy} into two steps. In the first, we show that the function $g_n(t)=\exp(\pi i f_n(t))$ is given by the ratio of solutions to a hypergeometric equation, and  in the second we show that  the periods \eqref{eq:twisted p} satisfy the same equation.

\subsection{}

The first part of the proof of Proposition \ref{endy}  is a minor extension of the usual proof of the Schwarz triangle theorem, see e.g.  \cite[Section 4.7]{H}, \cite[Section V.7]{Ne}. The basic point is that since the map $g_n$ maps the three connected components of $\R\setminus\{0,1\}$ to  arcs of circles in $\P^1$, it is given by a ratio of solutions to a hypergeometric equation, whose coefficients are determined by the angles at which these circles meet. In contrast to the usual setting of the Schwarz triangle theorem, the map $g_n$ is not injective on the upper half-plane, and one of the angles is $>\pi$. Although only small extensions to the usual argument are required to deal with this issue,   for the convenience of the reader we will give the argument in full. 

The main tool in the proof is the Schwarzian derivative, whose key properties  we briefly recall here. Let $f\colon U\to \bC$ be a holomorphic function on a domain $U\subset \bC$.
The Schwarzian derivative is defined by the expression
\[\cS f(t)=
    \bigg(\frac{f''(t)}{f'(t)}\bigg)'-\frac{1}{2} \bigg(\frac{f''(t)}{f'(t)}\bigg)^2.\]
   We shall need the following properties:
   \begin{itemize}
   \item[(S1)] Suppose that  $f'(t)$ is non-vanishing  on  a domain $U\subset \bC$. Thus
   $Q(t)=-\cS f(t)/2$ defines a holomorphic function $Q\colon U\to \C$. Then there exist two holomorphic solutions $y_1(t), y_2(t)$ to the differential equation
   \begin{equation*}
   y''(t)-Q(t)y(t)=0, \end{equation*}
such that $f(t)=y_1(t)/y_2(t)$.\smallskip

  \item[(S2)] If $R\in \operatorname{PGL}_2(\C)$ is a M{\"o}bius transformation, then $\cS(R\circ f)=\cS(f)$.\smallskip

  \item[(S3)] If $f\colon U\to \C$ and $g\colon V\to \C$ are composable holomorphic functions, then \[\cS(f\circ g)=(\cS(f)\circ g)\cdot (g')^2 + \cS(g).\]
  \end{itemize}


 We shall also need the following standard computation.

 \begin{lemma}
 \label{extraord}
 Suppose  that $k\colon U\to \C$ is a holomorphic function defined in a neighbourhood of $t_0\in U\subset \C$, which satisfies   $k(t_0)=0$ and $k'(t_0)\neq 0$. Take $\alpha\in \R$ and set $q(z)=z^\alpha$, viewed as a multi-valued  function on $\C^*$. Then    $\cS(q\circ k)$  is a single-valued  meromorphic function at $t=t_0$, and satisfies
\begin{equation} \label{extraord2}\cS(q\circ k)=\frac{1-\alpha^2}{2(t-t_0)^2}+O\Big(\frac{1}{t-t_0}\Big).\end{equation}
\end{lemma}

\begin{proof} Shrinking $U$ if necessary we can assume that the only zero of $k$ on $U$ is at $t=t_0$. Then $q\circ k$ is a multi-valued holomorphic function on $U\setminus\{t_0\}$, but the associated Schwarzian derivative $\cS(q\circ k)$ is clearly single-valued, since the various branches of $q$ differ by multiplication by a constant factor. A direct calculation shows that
\[\cS q(z)=\frac{1-\alpha^2}{2z^2}.\]
The result follows by applying property (S3) on subdomains of $U\setminus\{t_0\}$ on which $q\circ z$ is single-valued.
\end{proof}

\subsection{}
Consider the function $g_n(t)=\exp(\pi i f_n(t))$. By definition this is holomorphic  on the upper half-plane $\H\subset \C$, with non-vanishing derivative. Our first objective is to show that the Schwarzian derivative $\cS g_n(t)$ extends to a meromorphic function on the Riemann sphere $\P^1$, and to understand its leading order behaviour at the points $t=0,1,\infty$.

Take a point $t\in \R\setminus\{0,1\}$ and a neighbourhood $t\in B\subset \bC\setminus\{0,1\}$ invariant  under complex conjugation. By construction, the function $g_n$ is holomorphic on $B\cap\H$, with non-vanishing derivative. Moreover,  $g_n$ extends continuously over $B \cap \R$, and the extension is injective, and maps  $B \cap \R$ onto the arc of a circle $C\subset \P^1$. Let $R$ be a M{\"o}bius transformation taking this circle $C$ to the real axis $\R\subset \C$.  Then the composite $R\circ g_n$ is  real-valued on $B\cap \R$, so by the Schwarz reflection principle extends to a holomorphic function on $B$. It is easy to see that this function is locally univalent, and hence has non-vanishing derivative. It follows that   $\cS(R\circ g_n)$ extends to a holomorphic function on $B$ which is real-valued on $B \cap \R$. But by property (S2), this function coincides with $\cS g_n$. Thus we conclude that $\cS g_n$, which is   holomorphic on $\H\subset \C$ by construction, extends to a holomorphic function on a neighbourhood of each point of $\R\setminus\{0,1\}$, and is real-valued on the real axis. Applying  the reflection principle again we find that $\cS g_n$ extends to a holomorphic function on $\C\setminus \{0,1\}$. It thus remains to examine its behaviour at the points $\{0,1,\infty\}$.

Consider first the point $t=0$, and again take a small neighbourhood $0\in B\subset \bC$ invariant  under complex conjugation. As before, the function $g_n$ is holomorphic on $B\cap\H$, with non-vanishing derivative,  and extends continuously to $B\cap\R$. This time the extension maps the two halves of the real axis $B\cap \R_{\pm}$ into the arcs of two circles $C_\pm$ which meet at an angle $\pi/2$ at the point $g_n(0)=i^{2-n}$.  Let $R$ be a M{\"o}bius transformation which maps this point to $0$, and the circles $C_-$ and $C_+$ to the circles   $\R$ and  $\R\cdot i$ respectively.  The function $k(t)=R(g_n(t))^2$ is then holomorphic on $B\cap\H$, and continuous and real-valued on $B\cap\R$, so by the reflection principle extends to a holomorphic function $k\colon B\to \C$ satisfying $k(0)=0$. It is easy to see that since $g_n$ is locally univalent on $B\cap\H$, the same is true of $k$, and hence $k'(0)\neq 0$.
Applying Lemma \ref{extraord}  to the function $R(g_n(t))=k(t)^{1/2}$, and using property (S2), we conclude that $\cS g_n$ has a double pole at $t=0$, with leading order behaviour given by the right-hand side of \eqref{extraord2}, with $t_0=0$ and $\alpha=\tfrac{1}{2}$.


A very similar argument applies at the point  $t=\infty$. It is convenient to set $u=t^{-1}$ and consider the behaviour of the map $h_n(u)= g_n(u^{-1})$ near the point $u=0$. The above argument then applies, with the two circles $C_\pm$ now meeting at an angle $\pi/3$ at the point $h_n(0)=\omega$.  We consider a M{\"o}bius transformation $R$ which maps this point  to $0$, and the circles $C_-$ and $C_+$ to  the rays of  argument $0$ and $\pi/3$ respectively. We then consider the function $k(u)=R(h_n(u))^3$. We conclude that $\cS h_n$ has a double pole at $u=0$, with leading order behaviour given by \eqref{extraord} with $\alpha=\tfrac{1}{3}$. Using property (S3) it follows that the leading order behaviour of $\cS g_n$  near $t=\infty$ is
\[\cS g_n(t)=\frac{4}{9t^2}+O(t^{-3}).\]
In particular, $\cS g_n(t)$ has a double zero at $t=\infty$.

 Finally, we consider the point $t=1$.  We again take a small neighbourhood $1\in B\subset \bC$ invariant  under complex conjugation. A little care is required since the function $g_n$ is not injective on $B\cap \H$. Nonetheless, the two components of $B\cap \R\setminus\{1\}$   are mapped by $g_n$ to the two circles $r_-$ and $\R\cdot i^{2-n}$, which meet at an angle of $\tfrac{1}{2}(n-1)\pi$ at the point $g_n(1)=\infty$. We can therefore take a M{\"o}bius transformation $R$ which maps $\infty$ to $0$,  and maps these two arcs of circles to  the rays $\R\cdot i^{n-1}$ and $\R$.

 Introduce the functions
\[l(t)=R(g_n(t))^{1/(n-1)}, \qquad k(t)=l(t)^2,\]
defined by some fixed branch of $z\mapsto z^{1/(n-1)}$  near $0$.
Note that if we set $p(z)=z^{n-1}$ then we can write
\[l=(p^{-1}\circ R \circ p)\circ (p^{-1}\circ g_n).\]
Since $R(\infty)=0$, we have $R(z)=(cz+d)^{-1}$, for some $c,d\in \C$ with $c\neq 0$, and it is then easily checked that  $p^{-1}\circ R \circ p$ is  holomorphic  at $\infty$, with non-vanishing derivative. The region $R_n^-$ is closed under the map $z\mapsto z/(n-1)$, so the function $\exp(\pi i f_n(t)/(n-1))$,  which is a branch of $p^{-1}\circ g_n(t)$, is  holomorphic on $B\cap\H$, and  has a continuous extension to $B\cap\R$ sending $t=1$ to $z=\infty$. Since the region $R_n$ is contained in the strip $|\Re(z)|<\tfrac{1}{2}(n-1)$, this function is moreover injective. But note that $l$ cannot be extended to a holomorphic function on $B$ since it maps the two components of $B\cap \R\setminus\{1\}$ to circles meeting at an angle of $\pi/2$.

We have now shown that   $k(t)=l(t)^2$ is a holomorphic function on $B\cap\H$, which extends continuously to $B\cap\R$ and satisfies $k(1)=0$. By construction it is real-valued on $B\cap\R$, so by the reflection principle extends to a holomorphic function on $B$, and  the last remarks of the previous paragraph shows  that  $k'(1)\neq 0$.    Setting  $\alpha=(n-1)/2$ we can apply Lemma \ref{extraord} to the function $R(g_n(t))=k(t)^\alpha$ to conclude that $\cS g_n$ has a double pole at $t=1$, with leading order behaviour given by \eqref{extraord2}.

 \subsection{}
 Consider again the function $g_n(t)=\exp(\pi i f_n(t))$ on $\H$.  The  property (S1)  shows that $g_n(t)$ is given by the ratio $y_2(t)/y_1(t)$ of two solutions  to the differential equation
   \begin{equation}
   \label{labels}
   y''(t)-Q(t)y(t)=0, \qquad Q(t)=-\tfrac{1}{2}\cS g_n(t).\end{equation}
   The function $Q(t)$ extends to a meromorphic on $\P^1$, with double poles at $t=0,1$, no other poles,  and a double zero at infinity. It follows that it is uniquely determined by the leading terms in the Laurent expansions at $t=0,1,\infty$. Indeed, if two  such functions had the same  leading terms at these points, then their difference would have a triple zero at infinity, at worst simple poles at $t=0,1$, and no other poles, and any such function is zero.

  It remains to show that the solutions $y_i(t)$ can be written as  period integrals of the form \eqref{eq:twisted p}. We will show in the next subsection that these period integrals satisfy a hypergeometric equation
  \begin{equation}
\label{diff2}t(t-1)p''(t)+\big((a+b+1)t-c\big)p'(t) +ab\cdot p(t)=0,\end{equation}
with parameters
\[a=\tfrac{1}{2}\big(\tfrac{1}{3}-\nu\big), \qquad b=-\tfrac{1}{2}\big(\tfrac{1}{3}+\nu\big), \qquad c=\tfrac{1}{2}.\]
A standard calculation, e.g. \cite[Section 7]{Ne}, puts this in the $Q$-form \eqref{labels}, with $-2Q(t)$ having leading order behaviour at the points $t=0,1,\infty$ given by expressions of the form \eqref{extraord2}, suitably interpreted at $t=\infty$. The relevant angles are \[\alpha_0=1-c=\tfrac{1}{2}, \qquad \alpha_1=c-a-b=\tfrac{1}{2}+\nu, \qquad \alpha_\infty=b-a=\tfrac{1}{3}.\] Since these agree with what we computed above, it follows that the differential equation \eqref{labels} coincides with the $Q$-form of the equation \eqref{diff2}.

 The final thing to check is that we can take the paths $\gamma_i$ in the integral \eqref{eq:twisted p} to be integral cycles rather than complex linear combinations of such cycles.  Note that there is a unique solution to \eqref{diff2} up to scale which vanishes at $t=\infty$. Since $g_n(t)\to 0$ as $t\to \infty$ in $\H$ this solution must coincide with $y_2$. But this is indeed given by an integral cycle, namely the vanishing cycle. Since the other solution $y_1$ is obtained by applying monodromy transformations, it follows that it is also integral.  Indeed, at $t=0$ the monodromy has order 2 and preserves the image of the real axis under $g_n$, namely the unit circle. It follows that it is $z\mapsto \pm z^{-1}$.

\subsection{} In this section we prove that the twisted periods \eqref{eq:twisted p} satisfy the hypergeometric differential equation \eqref{diff2}. Let us consider $a\in \C$ to be fixed. As always we set $\nu=(n-2)/2$ with $n\geq 3$. Consider the function
\[f_a(h)=h^{-(\nu+1)}\int e^{h(u^3+au)} du=h^{-(\nu+\frac{4}{3})} \int e^{w^3+h^{2/3} a w} dw,\]
where we set $w=h^{1/3}\cdot  u$. Introduce the differential operator
\[D_h=h\partial_h +\nu+1.\]
Then
\[(D_h+\frac{1}{3}) f_a(h)= \frac{2}{3}\cdot h^{-(\nu+\frac{4}{3})} \int h^{\frac{2}{3}} \cdot aw\cdot e^{w^3+h^{2/3} a w} dw. \]
Repeating we obtain
\[(D_h-\frac{1}{3})(D_h+\frac{1}{3}) f_a(h) = \frac{4}{9}\cdot h^{-(\nu+\frac{4}{3})}\int h^{\frac{4}{3}} \cdot (aw)^2\cdot e^{w^3+h^{2/3} aw} dw,\]
and it follows that
\[\bigg((D_h-\frac{1}{3})(D_h+\frac{1}{3})+\frac{4a^3}{27}\cdot h^2\bigg) f_a(h)=h^{-\nu} \cdot \frac{4a^2}{27} \int   (3w^2+h^{\frac{2}{3}}a) \cdot e^{w^3+h^{2/3} aw} \, dw=0.\]

Now consider the inverse Laplace transform
\[p_a(b)=\int e^{bh} f_a(h) \,dh=\int \int e^{h(u^3+au+b)} \cdot h^{-(\nu+1)} \,du \,dh.\]
Exchanging the order of integration and using
\[\int e^{h(s+b)} \cdot h^{-(\nu+1)}dh= \Gamma(-\nu)\cdot (s+b)^{\nu},\]
where $\Gamma(x)$ denotes the gamma function, this becomes
\[p_a(b)=\Gamma(-\nu)\cdot \int (x^3+ax+b)^{\nu}\, dx.\]
Under the inverse transform $h \partial_h$ becomes $-b\partial_b -1$, so the transform of the operator $D_h$ is
$-b \partial_b +\nu$.
The twisted periods therefore satisfy the differential equation
\[\bigg((-b\partial_b +\nu+\frac{1}{3})(-b\partial_b+\nu-\frac{1}{3})+\frac{4 a^3}{27} \cdot \partial_b^2\bigg)\,p_a(b)=0.\]
Setting $t=-27b^2/4a^3$
this becomes
\begin{equation}
\label{diff}\Big(t(t-1)\partial_t^2+\Big((1-\nu)t-\frac{1}{2}\Big)\partial_t +\frac{1}{4}\big(\nu^2-\frac{1}{9}\big)\Big)p(t)=0,\end{equation}
which coincides with \eqref{diff2}.

\begin{figure}\centering
\begin{tikzpicture}[scale=.35]
\path (0,0) coordinate (O);
\path (4,0) coordinate (A);
\path (3+0.2,10) coordinate (m1);
\path (3+0.2,-10) coordinate (m2);
\draw[fill=gray!14] (m1)
    .. controls +(-90:3) and +(120:1) .. (A)
    .. controls +(-120:1) and +(90:3).. (m2)
   to [out=180,in=0] (-15,-10)
   .. controls +(90:3) and +(-90:1) ..  (-15,10) ;
\draw[white,thick] (m2) edge (-15,-10);
\draw[dotted,->,>=stealth] (-20,0) -- (8,0) node[right]{$x$};
\draw[dotted,->,>=stealth] (0,-10) -- (0,10) node[above]{$y$};
\draw (0,0) node[below left]{$0$};
\path[dotted] (6,-10) edge (6,10);
\path[dotted] (3,-10) edge (3,10);
\draw[red,thick] (A) edge (6,0);
\draw[red,thick] (m1) .. controls +(-90:3) and +(120:1) .. (A);
\draw[red,thick] (A)  .. controls +(-120:1) and +(90:3).. (m2);
\draw (6,0) node[below right] {$1$};
\path (5.5,0) node (a) {$$};
\path (3.6,1.5) node (b) {$$};
\path (3.6,-1.5) node (c) {$$};
\path[->,orange, bend right] (a) edge (b);
\path[->,orange, bend right] (b) edge (c);
\path[->,orange, bend right] (c) edge (a);
\path (4,6) node {$l_+$};\path (4,-6) node {$l_-$};
\draw[fill=black] (0,0) circle (.05);
\draw[fill=white] (6,0) circle (.1);
\draw[fill=red] (4,0) circle (.2);
%
%
\path (4,0) node[below right] {\tiny{$\frac{2}{3}$}};
\path (-15,0) node[left] {$-\infty$};
\draw[white,thick] (-15,10) edge (-15,-10);
\end{tikzpicture}
\caption{The region $R_\infty$}\label{fig:A2}
\end{figure}

\subsection{}
For the case $n=\infty$ we instead consider the region $R_\infty$ depicted in Figure \ref{fig:A2}. It is bounded by the same two curves $\ell_\pm$. We again consider the half region $R_\infty^-$ consisting of points of $R_\infty$ with negative imaginary part.
The boundary   has  two corners, namely $\tfrac{2}{3}$ and $\infty$.
Carath{\'e}odory's extension of the Riemann mapping theorem ensures that there is a  biholomorphism
\begin{equation}
\label{rm}f_\infty\colon \H\to R_\infty^+\end{equation}
which extends continuously over the boundary, and sends the points $(0,\infty)$ to $(\frac{2}{3},\infty)$. Considering the orientations of the two regions shows that $\R_{<0}$ is mapped to $\ell_-$, and $\R_{>0}$ to the open interval of the real axis $(-\infty,2/3)$.
Note that in this case the map $f_\infty(t)$ is not unique: precomposing it with a dilation of the form $t\mapsto \lambda\cdot t$ with $\lambda\in \R_{>0}$ gives another suitable map.

\begin{prop}
\label{osc}
One possible choice for the  function $f_\infty$ can be  written explicitly as
\[f_\infty(t)= \frac{1}{\pi i} \log\bigg(\frac{\phi^{(2)}_\infty(a,b)}{\phi^{(1)}_\infty(a,b)}\bigg),\]
where $t=a^3$, $b$ is arbitrary, and
\begin{equation}\label{osci}\phi_\infty^{(i)}(a,b)=\int_{\delta_i} \exp\big(x^3+ax+b\big) \, dx,\end{equation}
for  certain  contours $\delta_i\colon \R\to \bC$ which satisfy $\Re \delta(x)^3\to -\infty$ as $x\to \pm \infty$.
\end{prop}

Note that the function $f_\infty$ only depends on $a$, because changing $b$  multiplies both functions $\phi_\infty^{(i)}(a,b)$ by an equal factor,  and hence leaves their ratio unchanged.

\begin{proof}
We shall give a direct proof in this case. Consider the Airy function
\[\Ai(a)=\frac{1}{2\pi i}\int_\delta \exp\Big(\frac{x^3}{3}-ax\Big) dx,\]
where the path of integration $\delta\colon \R\to \bC$ tends to $\infty$ along the rays of argument $\pm \pi/3$ as $x\to \pm \infty$.  We shall need two standard properties of this function, see e.g. \cite[Section 8.9]{H} or \cite[Section 11.8]{O}. Firstly, it is an entire function of $a\in \C$, with zeroes only on the negative real axis. Secondly, there is an asymptotic expansion of $\Ai(a)$ as $a\to \infty$ which implies that there is a constant $R\in \bC$ such that
\begin{equation}
\label{asy}\Ai(a)\cdot \exp\bigg(\frac{2a^{\frac{3}{2}}}{3} \bigg)\cdot a^{\frac{1}{4}}\to R,\end{equation}
as $a\to \infty$ in any closed subsector of $\bC\setminus\R_{<0}$. In particular, it follows  that $\Ai(a)\to 0$ as $a\to \infty$ in the closed sector $|\arg(a)|\leq \pi/3$.

Let us introduce the functions
\[g(a)=\frac{\omega^2 \Ai(-\omega^2 a)}{\omega \Ai(-\omega a)}, \qquad h(a)=\frac{1}{\pi i}\log g(a), \qquad f(t)=h\big(t^{1/3}\big).\]
By what was said above, the function  $g(a)$ is meromorphic, with zeroes and poles only on the rays $\R_{>0}\cdot \omega^{\pm 1}$. In particular, it is regular and non-vanishing on a neighbourhood of the closed sector $\Sigma\subset \bC$ bounded by the rays of argument $0$ and $\pi/3$. Choosing the principal branch of $\log$, the function $h(a)$ is then  well-defined and holomorphic on this sector, and satisfies $h(0)=2/3$. To define the function $f(t)$ we choose the branch of the cube root function on $\H$ which lies in $\Sigma$.   Then $f(t)$ is also well-defined and holomorphic on $\H$.

We will show that the function $f(t)$ gives a possible choice for the  map \eqref{rm}. In view of the obvious relation
\begin{equation}
\label{airy}\omega^i\cdot \Ai(\omega^i a)=\frac{1}{2\pi i}\int_{\omega^i\cdot \delta} \exp\Big(\frac{x^3}{3}-ax\Big) dx,\end{equation}
this will be enough to prove the result. Note that the factor of $\tfrac{1}{3}$ multiplying $x^3$ can be absorbed by rescaling $t$ by a  factor of $3$, which is a transformation of the form mentioned just before the statement of the Proposition.

It is an immediate consequence of \eqref{airy} that
\[\Ai(a)+\omega \Ai(\omega a)+\omega^2\Ai(\omega^2a)=0,\]
and it follows that
\begin{equation}
\label{aaaa}g(\omega^i a)=T^{i} (g(a)),\end{equation}
where $T(z)=-(z+1)/z$ is the M{\"o}bius transformation  of order 3 appearing in \eqref{drn}.
The definition of the Airy function implies that
$\overline{\Ai( a)}=\Ai(\bar{a}),$ and hence that
\begin{equation}
\label{bbbb}\overline{g( a)}=g( \bar{a})^{-1}.\end{equation}
It follows that $g$ maps points on the real axis onto  the unit circle $C_0$. Using \eqref{drn} and \eqref{aaaa} this implies that $g$ maps the ray of argument $\pi/3$, which is part of the circle $\R\cdot \omega^2$, to the circle $C_-=T^{-1}(C_0)$.

Putting all this together we find that the function $f$ is holomorphic on the upper half-plane and extends continuously to the boundary. It satisfies $f(0)=\tfrac{2}{3}$ and maps the boundary rays $\R_{<0}$ and $\R_{>0}$ to the images under $(1/i\pi)\log (z)$ of the circles $C_-$ and $C_0$ respectively. The asymptotic property \eqref{asy} shows that $f$  extends continuously over $\infty$.

Let us now compose $f$ with the inverse of the Riemann map  \eqref{rm}. The resulting function $k=f\circ f_\infty^{-1}$ defines a holomorphic map $k\colon \H\to \H$ which extends continuously over the boundary, and satisfies $k(\R)\subset \R$. By the  reflection principle, $k$  extends to a meromorphic function on $\P^1$, which has  a single pole at $\infty$. The asymptotic formula \eqref{asy} shows that $f(t)$ behaves like a constant multiple of $t^{1/2}$ as $t\to \infty$. It follows from this that  the pole of $k$ is simple, and hence that $k(t)=\lambda\cdot t$ for some $\lambda\in \R_{>0}$.
\end{proof}


\section{Stability conditions}
\label{curry}
In this section we analyse the space of stability conditions on the categories $\D_n$ and give the proof of the main result, Theorem \ref{first}. We refer to \cite{B} and \cite[Section 7]{BrSm} for basic definitions concerning stability conditions. The basic idea is to identify a fundamental domain $\cU_n$ for the action of the group $\P \Aut_*(\D_n)$ on the space of projective stability conditions $\P\Stab_*(\D_n)$, and show that its image under the logarithmic central charge map can be identified with the  region $R_n$ of Section \ref{conf}.


\subsection{} \label{bking}Take $3\leq n\leq \infty$ and let $\Stab(D_n)$ denote the space of stability conditions on the triangulated category $\D_n$. In particular, all stability conditions in this space are assumed to satisfy the support property. There is a distinguished connected component  $\Stab_*(\D_n)\subset \Stab(D_n)$ which  contains all stability conditions $\sigma=(Z,\mathcal{P})$ whose heart $\mathcal{P}((0,1])$ coincides with the canonical heart $\A_n\subset \D_n$.

 The group of autoequivalences $\Aut(\D_n)$ acts on $\Stab(\D_n)$ by
\[\Phi\cdot (Z,\mathcal{P})=(Z',\mathcal{P}'), \qquad Z'(E)= Z(\Phi^{-1}(E)), \qquad \mathcal{P}'(\phi)=\Phi(\mathcal{P}(\phi)).\]
Recall that  an autoequivalence $\Phi\in \Aut(\D_n)$ is negligible if it fixes the objects $S_1$, $S_2$. Any such autoequivalence  acts trivially on an open subset of $\Stab_*(\D_n)$, and hence on the whole connected component $\Stab_*(\D_n)$. Conversely, any autoequivalence which acts trivially on $\Stab_*(\D_n)$ must be negligible.

There is an action of $\C$ on $\Stab(\D_n)$, commuting with the action of $\Aut(\D_n)$, given by the rule
\[\lambda\cdot (Z,\mathcal{P})=(Z',\mathcal{P}'), \qquad Z'(E)=\exp(-i\pi \lambda)\cdot Z(E), \qquad \mathcal{P}'(\phi)=\mathcal{P}(\phi+\Re(\lambda)).\]
Note that the action of the shift $[k]\in \Aut(\D_n)$  coincides with the action of $k\in \C$.
This action of $\C$ on $\Stab(\D_n)$ is free and proper, and we can consider the quotient complex manifold
\[\P\Stab(\D_n)=\Stab(\D_n)/\C,\]
together with the distinguished connected component $\P\Stab_*(\D_n)\subset \P\Stab(\D_n)$.
The central charge map induces a local isomorphism of complex manifolds \begin{equation}
\label{4am}\P\Stab_*(\D_n)\to \P \Hom_\Z(K_0(\D_n),\C)\isom \P^1.\end{equation}

We refer to points of the quotient space $\P\Stab_*(\D_n)$ as \emph{projective stability conditions}. Note that a projective stability condition determines full subcategories of stable and semistable objects of $\D_n$, and  a well-defined   phase difference $\phi(B)-\phi(A)\in \R$ for any two semistable objects $A,B\in \D_n$.


\subsection{}
Let us again take $3\leq n\leq \infty$ and define the following subsets of the space of projective stability conditions $\P\Stab_*(\D_n)$.

\begin{defn} \label{fundomain}
\begin{itemize}
\item[(a)] Let $\cU_n^+\subset \P\Stab_*(\D_n)$ be the subset of projective stability conditions for which  the objects $S_1$ and $S_2$ are both stable and \begin{equation}
\label{inp}0\leq  \phi(S_2)-\phi(S_1)< (n-2)/2.\end{equation}
\item[(b)] Let $\cU_n^-\subset \P\Stab_*(\D_n)$ be the subset of projective stability conditions for which the objects $S_1$, $S_2$ and $E$ are all stable, and  both of the following  inequalities  hold:
\begin{equation}
\label{in}\phi(S_1)-\phi(S_2) < \phi(S_2[1])-\phi(E),\quad  \phi(S_1)-\phi(S_2)< \phi(E)-\phi(S_1[-1]).
\end{equation}
\item[(c)] Let $\cU_n=\cU_n^-\cup\cU_n^+\subset \P\Stab_*(\D_n)$.
\end{itemize}
\end{defn}

In Prop. \ref{fundom} below we will show that the subset $\cU_n$ is a fundamental domain for the action of $\P\Aut_*(\D_n)$. The following result is a first step towards this.

\begin{lemma}
\label{lem}
For any $3\leq n\leq \infty$
the subset $\cU_n\subset \P\Stab_*(\D_n)$ is open. Moreover, if a
projective stability condition lies in the boundary of $\cU_n$ then one of the following two conditions is satisfied:
\begin{itemize}
\item[(i)] the set of stable objects up to shift is $\{S_1,S_2\}$, and \begin{equation}\label{in1}\phi(S_2)-\phi(S_1)= (n-2)/2;\end{equation}
\item[(ii)] the set of stable objects up to shift is $\{S_1,S_2,E\}$, and the inequalities
\begin{equation}
\label{in2}\phi(S_1)-\phi(S_2)\leq  \phi(S_2[1])-\phi(E),\quad \phi(S_1)-\phi(S_2)\leq  \phi(E)-\phi(S_1[-1])\end{equation}
hold, with one or both being an equality.
\end{itemize}
In particular, when $n=\infty$, only case (ii) can occur.
\end{lemma}

\begin{proof}
It is a general fact \cite[Prop. 7.6]{BrSm} that if an object $S$ is stable with respect to some stability condition then the same is true for all stability conditions in an open neighbourhood. Moreover the phase $\phi(S)$ varies continuously in this neighbourhood. It follows immediately that if  $\bar{\sigma}\in \cU_n$ satisfies $\phi(S_1)\neq \phi(S_2)$, then an open neighbourhood of $\bar{\sigma}$ also lies in $\cU_n$. On the other hand, any projective stability condition  for which $S_1$, $S_2$ are semistable with $\phi(S_1)=\phi(S_2)$ can be lifted to a   stability condition  with standard heart $\A_n$. The object $E$ is then necessarily semistable with $\phi(E)=\phi(S_i)$, and all points in an open neighbourhood of $\bar{\sigma}$  lie in either $\cU_n^-$ or $\cU_n^+$ depending on the sign of $\phi(S_2)-\phi(S_1)$. Thus $\cU_n$ is open.

Consider  $\bar{\sigma}\in\P\Stab_*(\D_n)$  lying in the closure of $\cU_n^+$. Then the objects $S_1$ and $S_2$ are semistable, and the non-strict version of the inequality \eqref{inp} holds. In particular $n<\infty$. It follows that $\bar{\sigma}$ can be lifted to a stability condition whose heart contains, and hence is equal to, the heart  $\A_n(k)=\<S_1[k],S_2\>$ of Prop. \ref{tilts}, for some $0\leq k\leq (n-2)/2$. This implies that the objects $S_1$ and $S_2$ are in fact stable. If $\bar{\sigma}$ lies in the boundary of $\cU_n^+$, so that $\bar{\sigma}\notin \cU_n^+$, it follows that the equality \eqref{in1} must hold.

Suppose now that $\bar{\sigma}\in\P\Stab_*(\D_n)$  lies in the closure of $\cU_n^-$.  Then the objects $S_1$, $S_2$ and $E$ are all semistable, and the  the inequalities \eqref{in2} both hold. The triangle associated to the short exact sequence
\eqref{ses} shows that for any lift \[\phi(S_2)\leq \phi(E)\leq \phi(S_1)\leq \phi(S_2)+1.\]  We cannot have $\phi(S_1)=\phi(S_2)+1$ since then either $\phi(E)=\phi(S_1)$, or $\phi(E)=\phi(S_2)$, and in either case one of the equalities \eqref{in2} fails. Thus $\phi(S_1)-\phi(S_2)<1$, and hence $\bar{\sigma}$ can be lifted to a stability condition with heart $\A_n$. If $\bar{\sigma}$ lies in the boundary of $\cU_n$ we cannot have $\phi(S_1)=\phi(S_2)$ since as above, such stability conditions lie in $\cU_n$. It follows that $S_1$, $S_2$ and $E$ are all in fact stable, and again, since $\bar{\sigma}\notin \cU_n$, at least one of the inequalities \eqref{in2} must be an equality.
\end{proof}

\subsection{}
The following result gives the link with the regions considered in Section \ref{conf}.

\begin{prop}
\label{rain}
Take $3\leq n \leq \infty$. Then the function \[g_n(\bar{\sigma})=\frac{1}{\pi i} \log \Big(\frac{Z(S_1)}{Z(S_2)}\Big)\]
defines a biholomorphic map between the regions $\cU_n$ and $R_n$, where the branch of $\log$ is chosen so that $\Re g_n(\sigma)=\phi(S_1)-\phi(S_2)$.
\end{prop}

\begin{proof}
It follows from the  proof of Lemma \ref{lem}  that if $\bar{\sigma}\in \cU_n$ then the set of stable objects up to shift is precisely $\{S_1,S_2\}$ or $\{S_1,S_2,E\}$ depending on whether $\bar{\sigma}\in \cU_n^+$ or $\bar{\sigma}\in \cU_n^-$. Indeed, if $\bar{\sigma}\in \cU_n^+$ then  it can be lifted to a stability condition whose heart  is $\A_n(k)=\<S_1[k],S_2\>$  for some $0\leq k\leq (n-2)/2$. If $k>0$ this heart is semisimple, so the only stable objects are $S_1$, $S_2$ up to shift, and if $k=0$ the same result holds since $\phi(S_1)\leq \phi(S_2)$. On the other hand, if $\bar{\sigma}\in \cU_n^+$ then $0<\phi(S_1)-\phi(S_2)<1$, and $\bar{\sigma}$ can be lifted to a stability condition with heart $\A_n$, which implies that the stable objects are precisely $S_1$, $S_2$ and $E$.

Let us consider the two halves $\cU_n^\pm$ of the fundamental domain  separately. The region $\cU_n^+$ consists of projective stability conditions $\bar{\sigma}$ for which   the only stable objects are $S_1$ and $S_2$ up to shift. It is easy to see that any such $\bar{\sigma}$   is  determined  by the value of $z=g_n(\bar{\sigma})\in \C$, and using the finite-length hearts $\A_n(k)=\<S_1[k],S_2\>$  of Prop. \ref{tilts} it is also easy to see that all  values of $z\in \C$ compatible with the constraint \eqref{inp} are realised. Thus $g_n$ maps the region $\cU^+_n$  bijectively onto the region $(2-n)/2<\Re(z)\leq 0$.

We showed above that for all projective stability conditions $\bar{\sigma}$ in the region $\cU^-_n$, the objects $S_1,S_2$ are stable,  and $0<\phi(S_1)-\phi(S_2)<1$. It is easy to see that the set of such projective stability conditions is mapped  bijectively by $g_n$ onto the strip $0<\Re(z)<1$. Since $Z(E)=Z(S_1)+Z(S_2)$, an elementary piece of geometry shows that
\[\phi(E)-\phi(S_2)<\phi(S_1)-\phi(E)\iff |Z(S_2)|<|Z(S_1)|.\]
Recalling from Lemma \ref{little} and Prop. \ref{easy} the action of $\Sigma$ on objects, and noting that by \eqref{5am} it induces the M{\"o}bius transformation $T$ of Section \ref{conf} on the projective space \eqref{4am}, we find that the inequalities \eqref{in} can be alternatively expressed as
\[|Z(S_1)+Z(S_2)|>|Z(S_2)|, \qquad |Z(S_1)+Z(S_2)|>|Z(S_1)|.\]
In terms of $z=Z(S_2)/Z(S_1) $ this gives the inequalities \eqref{eq:ineq} defining the grey area of Figure ~\ref{fig:circles}.
Thus the image of $\cU^-_n$ is the part of the strip $0<\Re(z)<1$ bounded by the images of the circles $C_\pm$ under the map $(1/i\pi) \log (z)$. \end{proof}

\subsection{}
The following  result will be a key ingredient in our proof of Theorem \ref{first}.

\begin{prop}
\label{fundom}
Take $3\leq n\leq \infty$. Then
\begin{itemize}
\item[(i)] an autoequivalence $\Phi\in \Aut(\D_n)$ is reachable precisely if its action on $\Stab(\D_n)$ preserves the connected component  $\Stab_*(\D_n)\subset \Stab(\D_n)$;\smallskip

\item[(ii)] the open subset $\cU_n\subset \P\Stab_*(\D_n)$ is a fundamental domain for the action of $\P\Auts(\D_n)$.
\end{itemize}
\end{prop}

\begin{proof}
Suppose   that $\bar{\sigma}$ lies in the intersection $\cU_n\cap \Phi^{-1}(\cU_n)$ for some $\Phi\in \Aut(\D_n)$, so  that $\bar{\sigma}\in\cU_n$ and also $\Phi(\bar{\sigma})\in \cU_n$.
Consider the case that $\bar{\sigma}\in \cU^+_n$. Then $\Phi$ preserves the  set of stable objects $\{S_1,S_2\}$ up to shift.  Suppose that $\Phi$  exchanges the two objects up to shift. Then given the degrees of the maps between $S_1$ and $S_2$ it follows that $n<\infty$. Using Lemma \ref{little} we then obtain
\[\Phi(S_1,S_2)=\Upsilon(S_1,S_2)=(S_2,S_1[n-2]))\]
up to shift, and the inequality \eqref{inp} then gives a contradiction. Thus $\Phi$ preserves the objects $S_i$ up to a shift, and it follows easily that $\Phi[d]$ is negligible for some $d\in \Z$, and hence that $\Phi$ is reachable and defines the  identity element of $\P\Aut_*(\D_n)$.

Consider now the  case when $\bar{\sigma}\in \cU^-_n$. Then $\Phi$ preserves the set of stable objects $\{S_1,S_2,E\}$ up to shift. Suppose that $\Phi$ defines a non-trivial  permutation of these objects  up to shift.
From Lemma \ref{little} and Prop. \ref{easy} we know that
\begin{equation}\label{yo}\Sigma(S_1,E,S_2)=(S_2[1],S_1,E),\qquad \Sigma^{-1}(S_1,E,S_2)=(E,S_2,S_1[-1]),\end{equation}
and from the degrees of the maps between $S_1$, $S_2$ and $E$ it follows
 that $\Phi=\Sigma^{\pm 1}$. Comparing \eqref{yo} with the form of the inequalities \eqref{in} gives a contradiction. Thus again $\Phi$ preserves the objects $S_i$ up to a shift, and hence defines the  identity element of $\P\Aut_*(\D_n)$.

Consider the union
\begin{equation}
\label{uss}W=\bigcup_{\Phi\in \P\Auts(\D_n)} \Phi(\bar{\cU}_n)\subset \P\Stab_*(\D_n).\end{equation} By Lemma \ref{lem}, for any projective stability condition $\bar{\sigma}$ lying  in the closure of $\cU_n$, the set of stable objects  up to shift is contained in $\{S_1,S_2,E\}$, and hence $\bar{\sigma}$ can only lie in the closure of a different region $\Phi(\cU_n)$ for finitely many  $\Phi\in \P\Auts(\D_n)$. Thus the union in \eqref{uss} is locally-finite, and hence $W$ is a closed subset.

We now prove that $W$ is an open subset. Consider  $\bar{\sigma}$ lying in the boundary of $\cU_n$, and a small open  neighbourhood $\bar{\sigma}\in V\subset \P\Stab_*(\D_n)$.  According to Lemma \ref{lem} there are two cases. In case (i) the stable objects are $S_1$, $S_2$ up to shift, and if $V$ is small enough these remain stable, and all points of $V$ lie in the closure of either $\cU_n$ or $\Upsilon(\cU_n)$ depending on which of the positive numbers
\[\phi(S_2)-\phi(S_1), \qquad \phi(S_1[n-2])-\phi(S_2),\]
is the smallest. In case (ii) the stable objects are $S_1,S_2$ and $E$. Once again, if $V$ is small enough these objects remain stable, and  all points of $V$ lie in the closure of one of the regions $\Sigma^i(\cU_n)$ depending on which of the positive numbers
\[\phi(S_1)-\phi(S_2),\qquad  \phi(S_2[1])-\phi(E),\qquad \phi(E)-\phi(S_1[-1]),\]
is the smallest. This completes the proof that $W$ is open. Since, by Prop. \ref{bug}(a), the autoequivalences $\Phi$ and $\Sigma$ generate $\P\Aut_*(\D_n)$, we also see that $W$ is connected. Thus  $W=\P\Stab_*(\D_n)$, and
in particular, any reachable autoequivalence preserves the connected component $\Stab_*(\D_n)$.

We have now proved one half of part (i) and part (ii). To complete the proof of (i)  take an autoequivalence $\Psi\in \Aut(\D_n)$ preserving the distinguished  component $\Stab_*(\D_n)$. Then $\Psi$ maps $\cU_n$ into  the union \eqref{uss}. Thus we can find a point $\bar{\sigma}\in \cU_n$, and a reachable autoequivalence $\Phi\in \Aut_*(\D_n)$ such that $\Psi(\bar{\sigma})=\Phi(\bar{\sigma})$. Applying the first two paragraphs of this proof to the composite $\Psi\circ\Phi^{-1}$  it follows that $\Psi$  is reachable.
\end{proof}

\begin{remark}
\label{forw}
Suppose a projective stability condition $\bar{\sigma}\in \P\Stab_*(\D_n)$ is fixed by some non-trivial element $\Phi\in \Aut_*(\D_n)$. By Prop. \ref{fundom}  it is enough to consider the case  when $\bar{\sigma}$ lies in the closure of the fundamental domain $\cU_n$. The proof of Prop. \ref{fundom} then shows that  there are two possibilities:
\begin{itemize}
\item[(a)]  the stable objects are $S_1,S_2$ up to shift, $\Phi=\Upsilon$, and
\[Z(S_2)=i^{n-2}\cdot Z(S_1), \qquad \phi(S_2)-\phi(S_1)=\tfrac{1}{2}(n-2);\]
\item[(b)]  the stable objects are $S_1,S_2$ and $E$ up to shift,  $\Phi=\Sigma^{\pm 1}$, and
\[Z(E[1])=\omega\cdot Z(S_1)=\omega^2\cdot Z(S_2), \]\[\phi(S_1)-\phi(S_2)=\phi(S_2[1])-\phi(E)= \phi(E)-\phi(S_1[-1])=\frac{2}{3}.\]
\end{itemize}

These two cases are illustrated in Figure \ref{special}.
For $n<\infty$  it is easy to see that both possibilities occur, and that the quotient
\[\P\Stab_*(\D_n)/\P\Auts(\D_n),\]
therefore has two orbifold points, with stabilizer groups  $\mu_2$ and $\mu_3$ respectively.
In the case $n=\infty$ only the second case occurs, and we obtain a single $\mu_3$ orbifold point in the quotient.
\end{remark}

\begin{figure}
\begin{center}
\begin{tikzpicture}[scale=1.5]
 \draw [->,>=stealth,dotted] (-1.5,0) -- (1.5,0);
 \draw [->,>=stealth,dotted] (0,-0.1) -- (0,1.5);
 \draw [->,thick,>=stealth](0,0) -- (0,1) node [anchor=east]{$S_2$};
 \draw [->,thick,>=stealth] (0,0) -- (1,0) node [anchor = north]{$S_1$};
 \draw [->,>=stealth,dotted] (2.5,0) -- (5.5,0);
 \draw [->,>=stealth,dotted] (4,-0.1) -- (4,1.5);
 \draw [->,thick,>=stealth](4,0) -- (4.5,0.86) node [anchor=west]{$E$};
 \draw [->,thick,>=stealth](4,0) -- (3.5,0.86) node [anchor=east]{$S_1$};
 \draw [->,thick,>=stealth] (4,0) -- (5,0) node [anchor = north]{$S_2$};
\end{tikzpicture}
\end{center}
\caption {Orbifold points in the boundary of $\cU_n$  when  $n$ is odd.\label{special}}
\end{figure}

\begin{figure}[t]\centering
\begin{tikzpicture}[scale=.7]
\path (0,0) coordinate (O);
\path (0,6) coordinate (S1);
\draw[fill=\graphc!7,dotted]
    (S1) arc(360/3-360/3+180:360-360/3:10.3923cm) -- (O) -- cycle;
\draw[,dotted] (210:6cm) -- (O);
\draw[fill=black] (30:1.6077cm) circle (.05cm);
\draw
    (O)[Periwinkle,dotted,thick]   circle (6cm);
\draw
    (S1)[\graphc,thick]
    \foreach \j in {1,...,3}{arc(360/3-\j*360/3+180:360-\j*360/3:10.3923cm)} -- cycle;
\draw
    (S1)[\graphc,semithick]
    \foreach \j in {1,...,6}{arc(360/6-\j*360/6+180:360-\j*360/6:3.4641cm)} -- cycle;
\draw
    (S1)[\graphc]
    \foreach \j in {1,...,12}{arc(360/12-\j*360/12+180:360-\j*360/12:1.6077cm)}
        -- cycle;
\foreach \j in {1,...,3}
{\path (-90+120*\j:.7cm) node[\vertexc] (v\j) {$\bullet$};
 \path (-210+120*\j:.7cm) node[\vertexc] (w\j) {$\bullet$};
 \path (-90+120*\j:2.2cm) node[\vertexc] (a\j) {$\bullet$};
 \path (-90+15+120*\j:3cm) node[\vertexc] (b\j) {$\bullet$};
 \path (-90-15+120*\j:3cm) node[\vertexc] (c\j) {$\bullet$};}
\foreach \j in {1,...,3}
{\path[->,>=stealth] (v\j) edge[\thirdc,bend left,thick] (w\j);
 \path[->,>=stealth] (a\j) edge[\thirdc,bend left,thick] (b\j);
 \path[->,>=stealth] (b\j) edge[\thirdc,bend left,thick] (c\j);
 \path[->,>=stealth] (c\j) edge[\thirdc,bend left,thick] (a\j);}
\foreach \j in {1,...,3}
{\path (60*\j*2-1-60:3.9cm) node[\vertexc] (x1\j) {$\bullet$};
 \path (60*\j*2+1-120:3.9cm) node[\vertexc] (x2\j) {$\bullet$};
}
\foreach \j in {1,...,3}
{\path[->,>=stealth] (v\j) edge[\halfc,bend left,thick] (a\j);
 \path[->,>=stealth] (a\j) edge[\halfc,bend left,thick] (v\j);
 \path[->,>=stealth] (b\j) edge[\halfc,bend left,thick] (x1\j);
 \path[->,>=stealth] (c\j) edge[\halfc,bend left,thick] (x2\j);
 \path[->,>=stealth] (x1\j) edge[\halfc,bend left,thick] (b\j);
 \path[->,>=stealth] (x2\j) edge[\halfc,bend left,thick] (c\j);}
\end{tikzpicture}
\caption{Exchange graph as the skeleton of space of stability conditions}\label{fig:LOGO}
\end{figure}

\subsection{}
We now  prove  a projectivised version of our main result, Theorem \ref{first}.
Recall from Section \ref{sec:intro}  that the quotient $\h/W$ is isomorphic to $\C^2$, and has  co-ordinates $(a,b)$ obtained by writing
\[p(x)=(x-u_1)(x-u_2)(x-u_3)=x^3+ax+b.\]
The image of the root hyperplanes $u_i=u_j$ is the discriminant
\[\Delta=\{(a,b)\in \C^2: 4a^3+27b^2=0\}.\]

The space $\h$ has a  $\C^*$ action which rescales the  co-ordinates $u_i$ with weight 1.
This acts on $(a,b)$ with weights $(2,3)$. We thus have
\[\C^*\backslash (\h\setminus\{0\})/W\isom \P(2,3).\]
The weighted projective space $\P(2,3)$  contains two orbifold points which we label by their stabilizer groups: thus $\mu_2=[1:0]$ and $\mu_3=[0:1]$. The image in $\P(2,3)$ of the discriminant is a single non-orbifold point  $\Delta=[-3:2]$. The coarse moduli space map is
\[\P(2,3)\to \P^1, \qquad [a:b]\mapsto [a^3:b^2].\]
We take the affine co-ordinate  $t=-27b^2/4a^3$ on the coarse moduli space. The points  $(\mu_2,\Delta, \mu_3)$ then correspond to $t=(0,1,\infty)$ respectively.

In the case $n=\infty$  we consider the  action of $\mu_3$ on  $\C$ given by $a\mapsto \omega\cdot a$. The quotient has a single orbifold point at $a=0$ with stabilizer group $\mu_3$. We take the co-ordinate $t=a^3$ on the coarse moduli space, which is isomorphic to $\C$.

\begin{thm}
\label{proj}
\begin{itemize}
\item[(a)] For $3\leq n<\infty$ the action of $\P\Auts(\D_n)$ on $\P\Stab_*(\D_n)$ is proper and quasi-free and there is an isomorphism of complex orbifolds
\[\P\Stab_*(\D_n)/\P\Auts(\D_n)\isom \P(2,3)\setminus\Delta.\]

\item[(b)] The action of $\P\Aut(\D_\infty)$ on $\P\Stab(\D_\infty)$ is proper and quasi-free and there is an isomorphism of complex orbifolds
\[\P\Stab(\D_\infty)/\P\Auts(\D_\infty)\isom \C/\mu_3.\]
\end{itemize}
\end{thm}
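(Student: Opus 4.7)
The plan is to combine the preceding fundamental-domain proposition with the uniformisation $g\colon \F\to R_n$ of Proposition~\ref{rain} and the Schwarzian analysis of Proposition~\ref{endy} to realise $\P\Stab_*(\D_n)/\P\Auts(\D_n)$ as the orbifold $\P(2,3)\setminus\{\Delta\}$: the boundary identifications of $\F$ on the stability side should transport, under the Riemann map $f_n$, to the natural gluing of the slit sphere $\P^1\setminus[0,\infty]$ into $\P^1$ with two orbifold points.

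For part (a), the fundamental-domain proposition shows that $\bar{\F}$ surjects onto $\P\Stab_*(\D_n)/\P\Auts(\D_n)$ and that the only identifications of boundary points come from $\Sigma$ and $\Upsilon$, which fix exactly one point each. Transporting to the model $R_n$: by Lemma~\ref{little}, the order-$2$ autoequivalence $\Upsilon\in\P\Auts$ swaps $S_1$ and $S_2$ up to shift and folds the vertical boundary $\Re(z)=(2-n)/2$ about the vertex $z=(2-n)/2$; the order-$3$ autoequivalence $\Sigma$ cyclically permutes $S_1,S_2,E$ up to shift and glues the arcs $\ell_\pm$ cyclically about the vertex $z=2/3$. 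The angles of $R_n$ at these two vertices (namely $\pi$ and $2\pi/3$, doubled from the exponents $1/2$ and $1/3$ of $f_n$) are exactly what is needed for the two (respectively three) translates of $\bar{\F}$ to cover a full disc, producing orbifold points of orders $2$ and $3$ in the quotient. At all other boundary points the stabiliser is trivial, establishing that the action is quasi-free.

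It remains to identify this orbifold with $\P(2,3)\setminus\{\Delta\}$. By Proposition~\ref{endy}, $f_n$ extends by Schwarz reflection across the interior real segment $[(2-n)/2,\,2/3]$ to a biholomorphism $\P^1\setminus [0,\infty]\xrightarrow{\sim} R_n$ sending the boundary vertices $(0,1,\infty)$ of $\H$ to $((2-n)/2,\infty,2/3)$. Under this identification the two sides of the subarc $[0,1]$ correspond to the upper and lower halves of the vertical boundary of $R_n$, glued by $\Upsilon$; similarly the two sides of $[1,\infty]$ correspond to $\ell_\pm$, glued by $\Sigma$. Closing up therefore yields the orbifold $\P^1\setminus\{1\}$ with $\mu_2$ at $0$ and $\mu_3$ at $\infty$; and the invariant $t=-27b^2/(4a^3)$ exhibits $\P(2,3)\setminus\{\Delta\}$ as precisely this orbifold, the loci $b=0$ and $a=0$ of $\P(2,3)$ mapping to $t=0$ and $t=\infty$ with stabilisers $\mu_2$ and $\mu_3$, and the discriminant to $t=1$.

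Part (b) is parallel but simpler. The autoequivalence group $\P\Aut(\D_\infty)\cong\mu_3$ (Proposition~\ref{easy}) has no analogue of $\Upsilon$, so the region $R_\infty$ has only two vertices $2/3$ and $\infty$ and only the identification of $\ell_\pm$ by $\Sigma$; the Riemann map $f_\infty$ (with $t=a^3$) then yields the quotient $\C/\mu_3$, which as a complex manifold (via $z\mapsto z^3$) is $\C$. The main obstacle in both parts is the key matching step in the third paragraph, namely verifying that the categorical identifications on $\partial R_n$ coming from $\Upsilon$ and $\Sigma$ coincide with the analytic identifications on $\P^1\setminus[0,\infty]$ implicit in $f_n$. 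This comes down to comparing the action of the spherical twists on $K_0(\D_n)$ (via Seidel--Thomas) with the Picard--Lefschetz monodromy of the hypergeometric (respectively Airy) periods established in Section~3.
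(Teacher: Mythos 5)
Your proposal follows the paper's proof essentially step for step: fundamental domain $\F$, the coordinate $g$ of Proposition~\ref{rain}, the Riemann map $f_n$, the boundary gluings by $\Sigma$ and $\Upsilon$ (whose orders $3$ and $2$ in $\P\Auts(\D_n)$ you state correctly), and the identification of the resulting orbifold with $\P(2,3)\setminus\{\Delta\}$ via the invariant $t=-27b^2/(4a^3)$. The one step you single out as the ``main obstacle'' needs no Picard--Lefschetz or $K_0$-versus-monodromy comparison: setting $z=Z(S_1)/Z(S_2)$, Lemma~\ref{little} gives $g(\Upsilon\sigma)=(2-n)-g(\sigma)=\overline{g(\sigma)}$ on the vertical boundary, while $\Sigma$ acts by $z\mapsto -(z+1)/z$, which agrees with $z\mapsto 1/\bar{z}$ exactly on the locus $|z+1|=1$ of \eqref{eq:boundary}; since $f_n$ is conjugation-equivariant by Schwarz reflection, both categorical gluings are the tautological gluing of the slit (and in any case the rigidity of $\P^1$ with three marked points already forces the orbifold isomorphism once the topology and stabiliser orders are known).
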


\begin{proof}
Consider part (a) first. We shall actually prove more, namely that there is an isomorphism of complex manifolds
\begin{equation}
\label{desp}h_n\colon \widetilde{\P(2,3)\setminus\Delta}\lra \P\Stab_*(\D_n),\end{equation}
where the space on the left is the orbifold universal cover, with the following two properties:
\begin{itemize}
\item[(i)]
 it intertwines the action of the orbifold fundamental group   on the left, with the action of $\P\Auts(\D_n)$ on the right, under the homomorphism
 \begin{equation}
 \label{row}\rho\colon \pi_1 (\P(2,3)\setminus \Delta)\to \P\Auts(\D_n),\end{equation}
 which sends the local generators at the  orbifold points $\mu_2$ and $\mu_3$ to the autoequivalences $\Upsilon$ and $\Sigma$ respectively;
\item[(ii)] it makes the following diagram commute
\[\begin{tikzpicture}
\draw(0,1) node (A) {$\widetilde{\P(2,3)\setminus\Delta}$}
     (5,1) node (B) {$\P\Stab_*(\D_n)$}
     (0,-1) node (C) {$\P^1$}
     (5,-1) node (D) {$\P\Hom_{\Z}(K_0(\D_n),\bC)$};
\draw[->,>=stealth]
    (A) -- (B) node[above, midway]{$h_n\quad$};
\draw[->,>=stealth]
    (A) edge (C)
    (B) edge (D) ;
\draw[->,>=stealth]
    (C) -- (D) node[above, midway]{$\quad\cong$};
\end{tikzpicture}\]
in which the vertical arrows are local analytic isomorphisms, given on the left by a particular ratio of twisted periods \eqref{eq:twisted p}, and on the right by the central charge map.
\end{itemize}

The fact that conditions (i) and (ii) are compatible is the statement that the monodromy representation of the equation \eqref{diff2} agrees with the action of the map $\P\Aut_*(\D_n)$ on central charges under the homomorphism $\rho$. This holds because the M\"{o}bius maps specified by the mondromy of the hypergeometric differential equation are precisely those defined by the matrices given in (\ref{5am})

 To construct a suitable map \eqref{desp}, first consider the subset $P_n\subset \P^1$ which is the union of the lower and upper half-planes $\pm \Im(t)>0$ in the coarse moduli space of $\P(2,3)$, glued along the segment
$(0,\infty)=(\mu_2,\mu_3)$.  We can glue  the isomorphism $f_n\colon \H\to R_n^-$   of  Section \ref{conf} to its complex conjugate  to obtain an isomorphism $f_n\colon P_n\to R_n$. Composing
this with the inverse of the  isomorphism $g_n\colon \cU_n\to R_n$ of Prop. \ref{rain}  gives a biholomorphic map
$h_n=g_n^{-1}\circ f_n\colon P_n\to \cU_n$. By construction this map satisfies the condition (ii).

Since $P_n$ is simply-connected we can lift it to the universal cover and hence view it as  a subset of the space appearing on the left of \eqref{desp}. The subsets
$P_n$ and $\cU_n$   are then both fundamental domains for the relevant group actions, so there is a unique way to extend $h_n$ uniquely so as to satisfy condition (i) on the dense  open subset which is the disjoint union of the translates of the fundamental domains. Since the conditions (i) and (ii) are compatible, the resulting map $h_n$ also satisfies (ii). Using the local homeomorphisms to $\P^1$ we can then extend $h_n$  over the boundaries of the fundamental domains to obtain the required isomorphism.

The proof of part (b) proceeds along similar lines.  The universal cover of the orbifold $\C/\mu_3$ is  just $\C$ itself, so in this case we look for an isomorphism
\[h_\infty\colon \C\to  \P\Stab_*(\D_\infty),\]
satisfying the properties
\begin{itemize}
\item[(i)]
it intertwines the action of $\mu_3$ on both sides, given on the left by $a\mapsto \omega\cdot a$, and on the right by the autoequivalence $\Sigma$;
\item[(ii)]it makes the following diagram commute
\[\begin{tikzpicture}
\draw(0,1) node (A) {$\C$}
     (5,1) node (B) {$\P\Stab_*(\D_\infty)$}
     (0,-1) node (C) {$\C/\mu_3$}
     (5,-1) node (D) {$\P\Hom_{\Z}(K_0(\D_\infty),\bC)$};
\draw[->,>=stealth]
    (A) -- (B) node[above, midway]{$h_\infty\quad$};
\draw[->,>=stealth]
    (A) edge (C)
    (B) edge (D) ;
\draw[->,>=stealth]
    (C) -- (D) node[above, midway]{$\quad\cong$};
\end{tikzpicture}\]
where the local analytic isomorphism on the left is given by a particular ratio of the functions \eqref{osci}.
\end{itemize}
The fact that these two conditions are compatible follows from the relations \eqref{airy} and \eqref{aaaa}, after recalling that the M{\"o}bius transformation $T$ is induced by the action of $\Sigma$ on the space
$\P\Hom_{\Z}(K_0(\D_\infty),\bC)$.

To construct the required map $h_\infty$, we first consider the subset $P_\infty\subset \bC$ which is the union of the lower and upper half-planes $\pm \Im(t)>0$ in the coarse moduli space, glued along the segment
$(0,\infty)=(\mu_3,\infty)$. We can glue  the isomorphism $f_\infty\colon \H\to R_\infty^-$   of  Section \ref{conf} to its complex conjugate  to obtain an isomorphism $f_\infty\colon P_\infty\to R_\infty$. Composing
with the inverse of the isomorphism $g_\infty\colon \cU_\infty\to R_\infty$ of Prop. \ref{rain} gives a biholomorphic map
$h_\infty\colon P_\infty\to \cU_\infty$. We can again lift $P_\infty$ into the universal cover, via the map $a=t^{1/3}$, and so  view $P_\infty$ as the sector in the $a$--plane bounded by the rays of phase $\pm \pi/3$. The resulting map  satisfies condition (ii) by construction.
We then extend the map $h_\infty$ to a dense open subset of $\bC$ using condition (i).  The compatibility of the two conditions ensures that this extension also satisfies condition (ii). Finally, we can  extend $h_\infty$  over the missing rays using the vertical local isomorphisms in the above commuting diagram.
\end{proof}

\subsection{}The final step is to lift Theorem \ref{proj} to obtain a proof of our main result Theorem \ref{first}. Consider first the case $3\leq n<\infty$.
We have a diagram of complex manifolds and holomorphic maps
\[\begin{tikzpicture}
\draw(0,1) node (A) {$\widetilde{\C^2\setminus \Delta}$}
     (5,1) node (B) {$\Stab_*(\D_n)$}
     (0,-1) node (C) {$\widetilde{\P(2,3)\setminus \Delta}$}
     (5,-1) node (D) {$\P\Stab_*(\D_n)$};
\draw[->,>=stealth]
    (A) edge (C)
    (B) edge (D)
    (C) -- (D) node[above, midway]{$h_n$};
\end{tikzpicture}\]
The vertical arrows are $\C$-bundles, and the bottom horizontal arrow $h_n$ is the isomorphism constructed in the proof of Theorem \ref{proj}.
We would like to complete the diagram by filling in an upper horizontal isomorphism satisfying the property claimed in Theorem \ref{first}. Note that by construction the central charge map
\[\P \Stab(\D_n) \to \P^1\]
corresponds under the isomorphism $h_n$ to the  map given by ratios of the functions $\phi^{(i)}_n(a,b)$ of Prop. \ref{endy}. These functions are well-defined on the universal cover of $\C^2\setminus\Delta$,  and there is therefore a unique way to fill in the upper arrow to give an isomorphism
\begin{equation}
\label{col} h_n\colon \widetilde{\C^2\setminus \Delta} \to\Stab_*(\D_n)\end{equation} so that the composition with the  central charge map on $\Stab_*(\D_n)$ is given by  the twisted periods $\phi^{(i)}_n(a,b)$.

The isomorphism \eqref{row} lifts to an isomorphism
\begin{equation}
\label{row2}\rho\colon \pi_1(\C^2\setminus\Delta)\to \Sph(\D_n),\end{equation}
by mapping the extra generator corresponding to a loop in the fibre of the $\C^*$-bundle  \[\pi\colon \C^2\setminus \Delta \to \P(2,3)\setminus\Delta\] to the element $[3n-4]$. This loop is given explicitly by a path of the form \[(a(\theta),b(\theta))=(e^{4\pi i \theta} \cdot a, e^{6\pi i \theta} \cdot b), \qquad \theta\in[0,1].\] By  the formula \eqref{eq:twisted p} this means that the cebtral charges of the objects $S_i$ vary as
\[Z(S_i)(\theta)=\phi^{(i)}(e^{4\pi i \theta} \cdot a, e^{6\pi i \theta} \cdot b)= e^{(3n-4)\pi i \theta}\cdot \phi^{(i)}(a,b)= e^{(3n-4)\pi i \theta}\cdot Z(S_i),\]
and hence the  phases of these objects increase by $3n-4$. This calculation shows  that the map \eqref{col}  we constructed  is equivariant with respect to the group isomorphism \eqref{row2}, and passing to quotients then gives the statement of Theorem \ref{first} (a).

In the case $n=\infty$ we have a similar diagram
\[\begin{tikzpicture}
\draw(0,1) node (A) {$\C^2$}
     (5,1) node (B) {$\Stab_*(\D_\infty)$}
     (0,-1) node (C) {$\C$}
     (5,-1) node (D) {$\P\Stab_*(\D_\infty)$};
\draw[->,>=stealth]
    (A) edge (C)
    (B) edge (D)
    (C) -- (D) node[above, midway]{$h_\infty$};
\end{tikzpicture}\]
in which the vertical arrows are again $\C$-bundles.
The bundle on the left is just the projection $\C^2\to \C$ given by $(a,b)\mapsto a$. By construction, the central charge map
\[\P \Stab_*(\D_\infty) \to \P^1\]
is given by ratios of the functions  $\phi^{(i)}_\infty(a,b)$ of Prop. \ref{osc}. These functions lift to $\C^2$ and there is  therefore a unique way to fill in the upper arrow with an isomorphism
\begin{equation}
\label{finishedatlast}h_\infty\colon \C^2\to \Stab_*(\D_\infty)\end{equation}
so that the  composition with the central charge map on $\Stab(\D_\infty)$ is given by   the  oscillating integrals $\phi^{(i)}_\infty(a,b)$. This completes the proof of Theorem \ref{first}.

Recall that in this case there is an isomorphism of groups
\[\Z\to \Aut(\D_\infty), \qquad 1\mapsto \Sigma.\]
 The map \eqref{finishedatlast} can be made equivariant  by letting $\Z$ act on $\bC^2$ via  \[(a,b)\mapsto (e^{2\pi i/3} \cdot a, b+\pi i/3).\] The element $3\mapsto \Sigma^3=[1]$ then fixes $a$ and acts by $b\mapsto b+\pi i$.


%
%

\end{document}